\newtheorem{thm}{Theorem}[section]
\newtheorem{cor}{Corollary}[section]
\newtheorem{prop}{Proposition}[section]
\newtheorem{lemma}{Lemma}[section]
\newtheorem{exa}{Example}[section]
\newcommand{\xxx}[1]{#1}
\newcommand{\R}{\mathbb R}
\newcommand{\bbS}{\mathbb S}
\numberwithin{equation}{section}
\numberwithin{table}{section} 
\numberwithin{figure}{section}
\begin{document}

\title[Spectrum of hypersurfaces and isoperimetric ratio]{Isoperimetric control of the
  spectrum of a compact hypersurface} 
\author{Bruno Colbois}
\address{Universit\'e de Neuch\^atel, Institut de Math\'ematiques, Rue
  Emile-Argand 11, Case postale 158, 2009 Neuch\^atel Switzerland}
\email{bruno.colbois@unine.ch}
\author{Ahmad El Soufi}
\address{Laboratoire de Math\'{e}matiques et Physique Th\'{e}orique,
  UMR-CNRS 6083, Universit\'{e} François Rabelais de Tours, Parc de
  Grandmont, 37200 Tours, France} 
\email{elsoufi@univ-tours.fr}
\author{Alexandre Girouard}
\address{Universit\'e de Neuch\^atel, Institut de Math\'ematiques, Rue
  Emile-Argand 11, Case postale 158, 2009 Neuch\^atel
  Switzerland}
\email{alexandre.girouard@unine.ch}

\thanks{The second author has benefited from the support of the ANR
  (Agence Nationale de la Recherche) through FOG project
  ANR-07-BLAN-0251-01.}

\date{\today}
\begin{abstract}
  Upper bounds for the eigenvalues of the Laplace-Beltrami operator on a
  hypersurface bounding a domain in some ambient Riemannian manifold are
  given in terms of the isoperimetric ratio of the domain.  These
  results are applied to the extrinsic geometry of isometric
  embeddings.
\end{abstract}


\subjclass[2000]{
58J50, 35P15}
\keywords{Laplacian, eigenvalue, submanifold, isoperimetric ratio}


\maketitle

\section{Introduction} 
The spectrum of the Laplace-Beltrami operator on a compact Riemannian
manifold $(\Sigma,g)$ of dimension $n\ge 2$ provides a sequence of
global Riemannian invariants
$$
0 = \lambda_1(\Sigma)\le\lambda_2(\Sigma) \leq
\lambda_3(\Sigma) \leq \cdots  \nearrow\infty.
$$
One of the main goals of  spectral geometry is to investigate
relationships between  these invariants  and  other geometric data of
the manifold $\Sigma$ such as the volume, the diameter, the curvature,
or the Cheeger isoperimetric constant. See~\cite{berard,bgm,cha1,
  dav} for classical references.


\medskip
Since the work of Bleecker and Weiner, Reilly and others,  the
following approach has been developed : the manifold $(\Sigma,g)$ is
immersed isometrically into Euclidean space, or a more general
ambient space. One then looks for relationships between the
eigenvalues $\lambda_k(\Sigma)$ and extrinsic geometric quantities
constructed from the second fundamental form of the immersed
submanifold, such as the length of the mean curvature vectorfield. See
for example \cite{BW, ehei,  EI, grosjean,hein,reilly}. It is worth noticing that the spectrum of $(\Sigma, g)$ cannot be
controlled only by the volume of $(\Sigma, g)$
(see~\cite {CD, colboisElSoufi, lohkamp}), even for isometrically
embedded hypersurfaces (see \cite[Theorem 1.4]{cde}).

\medskip
More recently, the first two authors and E. Dryden \cite{cde} have
obtained upper estimates for all  normalized eigenvalues
$\lambda_k(\Sigma)\vert \Sigma\vert^{2/n},$
where $\vert \Sigma\vert$ denotes the Riemannian volume of
$\Sigma$,  in terms of the number of intersection points of  the
immersed submanifold  with a generic affine plane of complementary
dimension. Such results allow a better understanding of the geometry
of a Riemannian metric $g$ on  $\Sigma$ inducing large eigenvalues,
that is such that  for some $k\ge 2$, the $k$-th normalized eigenvalue
$\lambda_k(\Sigma,g) {\vert (\Sigma,g) \vert}^{2/n}$ is large.
Indeed, if $g$ is such a metric, then  any  isometric immersion of
$(\Sigma,g)$  into the Euclidean space $\R^{n+p}$ must have a large
mean curvature, at least somewhere, and a large number of intersection
points with some $p$-planes.

\medskip
In the same vein, Reilly \cite[Corollary 1]{reilly} and
Chavel~\cite{cha2}  obtained the following remarkable 
inequality for the first positive eigenvalue $\lambda_2(\Sigma)$ in
the case where $\Sigma$ is embedded as a hypersurface bounding a
domain $\Omega$ in $\R^{n+1}$ (or in a Cartan-Hadamard manifold
in~\cite{cha2}):
\begin{equation}\label{chavel}
  \lambda_2(\Sigma)\vert \Sigma\vert^{2/n}\leq\frac{n}{(n+1)^2}I(\Omega)^{2+\frac{2}{n}},
\end{equation}
where $I(\Omega)$ is the isoperimetric ratio of $\Omega$, that is   
$$I(\Omega)=\frac{\vert \Sigma \vert}{\vert \Omega \vert^{n/(n+1)}},$$
where $\vert \Sigma\vert$ and $\vert \Omega \vert$ stand for the
Riemannian $n$-volume of $\Sigma$ and the Riemannian $(n+1)$-volume of
$\Omega$, respectively.  Moreover, equality holds in \eqref{chavel} if
and only if $\Sigma$ is embedded as a round sphere.  

\medskip
The main feature of the upper bound~\eqref{chavel}  is its low
sensitivity to small deformations, compared to that of the
curvature or the intersection index. This result of Reilly and
Chavel has been revisited by many authors \cite{AdoCR,GMO,WangXia},
but only for the first non-zero eigenvalue $\lambda_2$, and  using
barycentric type methods involving coordinate functions.

\medskip
Our aim in this paper is to establish inequalities of
Reilly-Chavel type for  higher order eigenvalues, that is to show that
the isoperimetric ratio $I(\Omega)$ allows a control  of the entire
spectrum of $\Sigma = \partial \Omega$, and in various ambient spaces.
Let us start with the particular but important case of compact
hypersurfaces in Euclidean space.

\begin{thm}\label{eucl}
  For any bounded domain $\Omega \subset \R^{n+1}$ with smooth
  boundary $\Sigma = \partial \Omega$, and all $k \ge 1$,  
  \begin{equation}\label{euc}
    \lambda_k(\Sigma)\vert \Sigma\vert^{2/n}\le \gamma_n I(\Omega)^{1+2/n}{k}^{2/n}
  \end{equation}
  with $\gamma_n = \frac{2^{10n+18+8/n}\  } {(n+1)}\
    \omega_{n+1}^{\frac 1{n+1}}$, where $\omega_{n+1}$ is the 
  volume of the unit ball in $\R^{n+1}$. 
\end{thm}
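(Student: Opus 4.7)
The strategy is to apply the min-max characterization of eigenvalues: I will exhibit $k+1$ functions in $H^1(\Sigma)$ with pairwise disjoint supports, each with Rayleigh quotient bounded by $\gamma_n I(\Omega)^{1+2/n}k^{2/n}|\Sigma|^{-2/n}$, whence the bound on $\lambda_k(\Sigma)$ follows.

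The key tool is a packing lemma of Grigor'yan--Netrusov--Yau type, applied to the doubling Euclidean space $\R^{n+1}$ equipped with the hypersurface measure $\mu=\mathcal{H}^n\llcorner\Sigma$. For a given $k$, this produces $2(k+1)$ pairwise disjoint subsets $A_i\subset\R^{n+1}$, each either a Euclidean ball or a concentric annulus $B(x_i,2r_i)\setminus B(x_i,r_i)$, together with concentric cores $A_i^*\subset A_i$ lying at Euclidean distance at least $r_i$ from the complement, such that $\mu(A_i^*)\ge c_n|\Sigma|/k$ and the dilates $2A_i$ remain pairwise disjoint. A pigeonhole step on the doubling ratios $\mu(A_i)/\mu(A_i^*)$ then retains $k+1$ indices on which $\mu(A_i)\le C_n\mu(A_i^*)$.

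For each retained $A_i$, let $\rho_i:\R^{n+1}\to[0,1]$ be a Lipschitz cutoff equal to $1$ on $A_i^*$, supported in $A_i$, with Lipschitz constant $1/r_i$, and set $f_i=\rho_i|_\Sigma$. The supports of the $f_i$ are pairwise disjoint. Since the tangential gradient is the orthogonal projection of the Euclidean gradient, $|\nabla_\Sigma f_i|\le 1/r_i$ pointwise on $\Sigma$, so
\[
  R_\Sigma(f_i)=\frac{\int_\Sigma|\nabla_\Sigma f_i|^2}{\int_\Sigma f_i^2}\le\frac{\mu(A_i)}{r_i^2\,\mu(A_i^*)}\le\frac{C_n}{r_i^2}.
\]

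The last and crucial step is a lower bound on the radii $r_i$. As $\mu$ fails to be doubling in general, one cannot bound $r_i$ from below by $\mu(A_i^*)$ alone; the isoperimetric ratio $I(\Omega)$ provides the missing bridge. Applying the Euclidean isoperimetric inequality to $\Omega\cap 2A_i$, whose topological boundary is contained in $(\Sigma\cap 2A_i)\cup\partial(2A_i)$, one controls $\mu(A_i^*)$ by a polynomial expression in $r_i$ and $|\Omega\cap 2A_i|$. Summing over $i$ using $\sum_i|\Omega\cap 2A_i|\le C_n|\Omega|$ and substituting $|\Omega|^{n/(n+1)}=|\Sigma|/I(\Omega)$ then yields an $r_i$-bound producing exactly the combination $k^{2/n}I(\Omega)^{1+2/n}|\Sigma|^{-2/n}$ upon inversion. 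Combined with the Rayleigh bound above and the min-max principle, this completes the proof. The main obstacle is precisely this quantitative bridging between surface measure, ambient radius, and isoperimetric ratio; the accumulated losses from the covering, pigeonhole, and isoperimetric steps explain the large explicit constant $\gamma_n\sim 2^{10n+\cdots}$.
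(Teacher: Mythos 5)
Your proposal takes a genuinely different route from the paper, and the crucial step does not clearly close. The paper does not use the Grigor'yan--Netrusov--Yau decomposition at all; it relies instead on a variant of a lemma of Colbois--Maerten (Lemma~\ref{CMrevisited}), and it applies the isoperimetric inequality \emph{globally}, not locally. Concretely, a fixed radius $r_k\sim (I_0(\Omega)/k)^{1/n}|\Omega|^{1/(n+1)}$ is chosen in advance, and one shows (Step 1 of the proof of Proposition~\ref{mainprop}) that after deleting any $2k$ balls of radius $4r_k$ from $\Omega$, the leftover domain $\Omega_0$ still carries at least $\tfrac34|\Omega|$ of the volume, so that the isoperimetric inequality $|\partial\Omega_0|\ge I_0(\Omega)|\Omega_0|^{n/(n+1)}$, combined with an upper bound on the total area of the spherical caps $\sum_j|\partial B(x_j,4r_k)|$, forces $|\Sigma_0|>\tfrac12\frac{I_0(\Omega)}{I(\Omega)}|\Sigma|$. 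This is the only place the isoperimetric ratio enters, and it produces the factor $I_0/I$ uniformly, for the single scale $r_k$. After that, a greedy construction of $2k$ balls and a dichotomy (Steps 3--4) handle the two regimes: if the last ball is heavily charged, the balls themselves serve as supports; otherwise Lemma~\ref{CMrevisited} is applied to the measure restricted to $\Sigma_0$ to produce well-separated sets.

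The gap in your plan is precisely the step you yourself flag as crucial: bounding $r_i$ from below via the isoperimetric inequality applied to $\Omega\cap 2A_i$. The inequality gives a \emph{lower} bound
$|\partial(\Omega\cap 2A_i)|\ge I_0\,|\Omega\cap 2A_i|^{n/(n+1)}$, and since $\partial(\Omega\cap 2A_i)\subset(\Sigma\cap 2A_i)\cup(\Omega\cap\partial(2A_i))$ this reads $\mu(2A_i)+C_n r_i^n\gtrsim I_0|\Omega\cap 2A_i|^{n/(n+1)}$. This is a lower bound on $\mu(2A_i)+C_n r_i^n$, not an upper bound on $\mu(A_i^*)$, and it cannot be inverted into a lower bound on $r_i$ without (i) first controlling $\mu(2A_i)$ from above \emph{in terms of the same right-hand side} and (ii) a lower bound on $|\Omega\cap 2A_i|$, neither of which is supplied by the GNY decomposition of $(\R^{n+1},d,\mathcal{H}^n\llcorner\Sigma)$. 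Indeed the whole difficulty in this problem is that $\mathcal{H}^n\llcorner\Sigma$ is not doubling: the surface can crowd a lot of area into an arbitrarily small ambient ball (with $\Omega$ correspondingly thin there), and the isoperimetric inequality applied to $\Omega\cap 2A_i$ does not forbid this. The paper sidesteps the issue by fixing the scale $r_k$ globally and using isoperimetry only to guarantee that a definite fraction of $\Sigma$ survives outside any $2k$ balls of that scale; the two-case analysis with the Colbois--Maerten lemma then does the rest without ever needing a per-piece lower bound on a variable radius. You would need to either reproduce an analogue of the paper's global Step~1 inside the GNY framework, or supply a different mechanism for the $r_i$ lower bound; as written, the sketch does not establish it.
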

This result can also be understood as an estimate of the volume
prescribed by a Riemannian manifold once embedded as an hypersurface
in $\R^{n+1}$. That is, if $(\Sigma , g)$ is a Riemannian manifold
of dimension $n$ of volume one, then, for any isometric embedding
$\phi:\Sigma\rightarrow\mathbb{R}^{n+1}$, the domain $\Omega$ bounded
by the hypersurface $\phi(\Sigma)$ satisfies, for each $k\ge 2$,
\begin{gather}\label{preisom}
  \vert\Omega\vert^{\frac{n+2}{n+1}}\leq
  \gamma_n  \frac{k^{\frac 2{n}}}{\lambda_k(\Sigma)}.
\end{gather}
In particular, if the Riemannian metric $g$ is such that
$\lambda_k$ is large, then the prescribed volume $\vert\Omega\vert$
has to be small (see \cite[Theorem 1.4]{cde} for the existence of hypersurfaces with large $\lambda_k$).

\medskip
For more general ambient spaces, we have the following theorem which
is a particular case of a more general result (Theorem
\ref{thm-metric}) we will prove in section \ref{sectionHypersurface}
in which the curvature assumptions are replaced by hypotheses of
metric type.
\begin{thm} \label{main1}
  Let $(M,h)$ be a complete Riemannian manifold of dimension $n+1$
  with Ricci curvature bounded below by $-na^2$, $a\in\R$. For any
  bounded domain $\Omega \subset M$ with smooth  boundary
  $\Sigma=\partial\Omega$, and all $k \ge 1$, we have 
  \begin{equation}\label{generalRiem}
    \lambda_k(\Sigma) \le
    \alpha_n\frac{I(\Omega)}{I_0(\Omega)}a^2+\beta_n
    \left(\frac{I(\Omega)}{I_0(\Omega)}\right)^{1+2/n}
    \left(\frac{k}{\vert \Sigma \vert}\right)^{2/n},
  \end{equation}
  where 
  $$I_0(\Omega)=\inf \{I(U): U \text{is an open  set  in\ } \Omega \}$$
  and $\alpha_n$ and $\beta_n$ are two constants depending only on $n$
  (see \eqref{alpha,a=0} and \eqref{alpha,a=1} for explicit
  expressions of these constants).
\end{thm}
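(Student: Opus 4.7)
The plan is to use the standard variational approach: by the min-max principle it suffices to construct $k$ functions on $\Sigma$ with pairwise disjoint supports, each having Rayleigh quotient bounded by the right-hand side of~\eqref{generalRiem}, and then to complement the family by the constant function to obtain $k+1$ linearly independent trial functions. These trial functions will be Lipschitz cutoffs built from the ambient distance on $M$ and restricted to $\Sigma$.

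To produce the disjoint supports, I would first invoke a metric-measure packing result of Grigor'yan--Netrusov--Yau / Colbois--Maerten type, applied to $(M,d_h)$ endowed with the measure $\mathcal H^n|_\Sigma$. The output is a family of $k$ pairs $(B(x_i,r_i),B(x_i,2r_i))$ with the outer balls $B(x_i,2r_i)$ pairwise disjoint, with $|\Sigma\cap B(x_i,r_i)|\geq c(n)|\Sigma|/k$, and with comparable surface measures $|\Sigma\cap B(x_i,2r_i)|\leq C(n)\,|\Sigma\cap B(x_i,r_i)|$. The hypothesis needed to run this packing is uniform volume doubling of $(M,d_h)$ at the working scale; this is exactly what Bishop--Gromov comparison provides under $\mathrm{Ric}\geq -na^2$, and it restricts the admissible radii to $r_i\lesssim 1/a$ when $a>0$.

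For each $i$ I would set $f_i(x)=\max\bigl(0,\,1-r_i^{-1}\mathrm{dist}(x,B(x_i,r_i))\bigr)$, restricted to $\Sigma$. Then $f_i\equiv 1$ on $\Sigma\cap B(x_i,r_i)$, is supported in $\Sigma\cap B(x_i,2r_i)$, and satisfies $|\nabla_\Sigma f_i|\leq 1/r_i$ almost everywhere. Its Rayleigh quotient is therefore bounded by $C(n)\,r_i^{-2}$. It remains to choose the radii: since the $k$ outer balls are disjoint, their $(n+1)$-volumes must fit in a neighborhood of $\Omega$, and the isoperimetric inequality applied to $U_i=B(x_i,2r_i)\cap\Omega$ converts the resulting $(n+1)$-volume packing estimate into the surface-area estimate via the bound $|U_i|\gtrsim I_0(\Omega)^{-(n+1)/n}|\partial U_i|^{(n+1)/n}$. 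Summing over $i$ and comparing with $|\Omega|=|\Sigma|\,I(\Omega)^{-(n+1)/n}$ produces, in the regime $r_i\lesssim 1/a$, the bound $r_i^{-2}\lesssim\bigl(I(\Omega)/I_0(\Omega)\bigr)^{1+2/n}(k/|\Sigma|)^{2/n}$, which is precisely the second term in~\eqref{generalRiem}.

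The main obstacle is the two-regime structure of the estimate. When $k$ is so small that the ideal radius $(|\Sigma|/k)^{1/n}$ would exceed $1/a$, Bishop--Gromov doubling deteriorates at that scale and one must cap the radii at $r_0\sim 1/a$; this degenerate regime is what produces the curvature term $\alpha_n\bigl(I(\Omega)/I_0(\Omega)\bigr)a^2$ in~\eqref{generalRiem}. Balancing these two regimes, keeping careful track of how the doubling constant in the packing lemma depends on $n$ and on $ar_0$, and extracting the precise power $1+2/n$ of the ratio $I(\Omega)/I_0(\Omega)$ from the isoperimetric comparison, are the delicate technical points. The full statement will then follow by deducing it from the more general metric Theorem~\ref{thm-metric}, into which Bishop--Gromov feeds as the required metric hypothesis.
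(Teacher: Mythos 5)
Your general plan --- Lipschitz cutoffs built from ambient metric balls, a Colbois--Maerten type packing lemma, Bishop--Gromov to control covering numbers, a cap $r_0\sim 1/a$ producing the curvature term, and a reduction to Theorem~\ref{thm-metric} --- does match the paper's architecture. But the claimed output of the packing step is too strong and would require $\mu=\mathcal H^n|_\Sigma$ to be a doubling measure, which it is not. Neither Lemma~\ref{CMrevisited} nor its GNY precursors produce pairs of \emph{balls} $(B(x_i,r_i),B(x_i,2r_i))$ with $\mu(B(x_i,2r_i))\le C(n)\,\mu(B(x_i,r_i))$; under a non-concentration hypothesis on $\mu$ they produce $K$ sets $A_i$ with $\mu(A_i)\ge\mu(X)/(2N(r)K)$ and mutual distances $\ge 3r$, with no a priori upper bound on the $\mu$-measure of their neighbourhoods. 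The proof of Proposition~\ref{mainprop} deals with this by a dichotomy which is absent from your sketch: either some ball $B(x_{2k},r_k)$ already carries a substantial portion of the surface measure, in which case $2k$ greedily-chosen disjoint balls serve as test functions directly (Step 3), or there is no such concentration, in which case Lemma~\ref{CMrevisited} applies and, among the $2k$ resulting $r_k$-neighbourhoods, disjointness guarantees at least $k$ of them have surface measure $\le|\Sigma|/k$ (Step 4). That dichotomy, not a doubling-type comparability, is what makes the argument close.

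The way the isoperimetric inequality is fed in is also not correct as written. You state $|U_i|\gtrsim I_0(\Omega)^{-(n+1)/n}|\partial U_i|^{(n+1)/n}$, but the isoperimetric inequality for a subdomain $U_i\subset\Omega$ runs the other way, $|U_i|\le \bigl(|\partial U_i|/I_0(\Omega)\bigr)^{(n+1)/n}$, and in any case $\partial U_i$ contains pieces of $\partial B(x_i,2r_i)$, so it does not control $|\Sigma\cap B(x_i,2r_i)|$. In the paper the $\Omega$-isoperimetric inequality is used at Step~1, in a different role: with $r_k$ chosen so that $2k$ balls of radius $4r_k$ have total volume $<\tfrac14|\Omega|$, one gets $|\Omega_0|>\tfrac34|\Omega|$ for the remainder $\Omega_0$, and then $|\partial\Omega_0|\ge I_0(\Omega)|\Omega_0|^{n/(n+1)}$ forces $|\Sigma_0|>\tfrac12\tfrac{I_0}{I}|\Sigma|$, i.e.\ a definite fraction of $\Sigma$ survives the removal of the balls. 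That step supplies one factor of $I/I_0$ in the Rayleigh quotient; the remaining $(I/I_0)^{2/n}$ comes from $r_k^{-2}$ via $|\Omega|^{-2/(n+1)}=(I(\Omega)/|\Sigma|)^{2/n}$. Trying to extract the full $(I/I_0)^{1+2/n}$ from $r_i^{-2}$ alone, as your sketch does, does not match this bookkeeping.
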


Observe that the power of $k$ appearing in the right hand side
of this estimate is optimal, according to Weyl's law.
  
\medskip
It is in general not easy to estimate the number $I_0(\Omega)$, which
represents the best constant in the isoperimetric inequality for
domains in $\Omega$. Recall that for any domain $\Omega$ in
$\R^{n+1}$, one has
$$I_0(\Omega)=I_0(\R^{n+1})= (n+1)\omega_{n+1}^{\frac 1{n+1}},$$
where $\omega_{n+1}$ denotes the volume of the unit ball in
$\R^{n+1}$.
In a Cartan-Hadamard manifold, it is  known that there exists a
universal positive constant $C_n$ such that  $I(\Omega)\ge C_n$ for any
bounded domain $\Omega$ (see \cite{croke2}).
More generally, if $(M,h)$ is any complete Riemannian manifold with
positive injectivity radius $\text{inj}(M)$, then  any domain $U$
contained in a geodesic ball of radius $r<\frac 12\text{inj}(M)$
satisfies $I(U)\ge C_n$ (see \cite{croke2} and \cite[Proposition
V.2.3]{cha3}). This leads to the following two corollaries.

\begin{cor}
 Let $(M,h)$ be a Cartan-Hadamard manifold of dimension $n+1$ with
 Ricci curvature bounded below by $-na^2$, $a\in \R$. For any bounded
 domain $\Omega \subset M$ with smooth boundary $\Sigma = \partial
 \Omega$, and all $k \ge 1$,
 \begin{equation}\label{cartan-hadamard}
   \lambda_k(\Sigma) \le 
   A_n I(\Omega)a^2+B_n I(\Omega)^{1+2/n}\left(\frac{k}{\vert \Sigma \vert}\right)^{2/n},
 \end{equation}
 where $A_n$ and $B_n$ are constants depending only on $n$.
\end{cor}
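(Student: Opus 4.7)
The plan is to deduce this corollary directly from Theorem \ref{main1} by controlling the quantity $I_0(\Omega)$ from below using the Cartan--Hadamard hypothesis. Theorem \ref{main1} already gives the bound
\begin{equation*}
\lambda_k(\Sigma) \le \alpha_n \frac{I(\Omega)}{I_0(\Omega)}a^2 + \beta_n \left(\frac{I(\Omega)}{I_0(\Omega)}\right)^{1+2/n}\left(\frac{k}{|\Sigma|}\right)^{2/n},
\end{equation*}
so what is needed is a universal (dimension-only) lower bound on $I_0(\Omega)$ that converts the ratio $I(\Omega)/I_0(\Omega)$ into a uniform multiple of $I(\Omega)$.

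The key input is Croke's isoperimetric inequality for Cartan--Hadamard manifolds, already recalled in the paragraph preceding the corollary: there exists a constant $C_n > 0$ depending only on $n$ such that $I(U) \ge C_n$ for every bounded domain $U$ in $M$. Taking the infimum over all open subsets $U \subset \Omega$ yields $I_0(\Omega) \ge C_n$, and therefore
\begin{equation*}
\frac{I(\Omega)}{I_0(\Omega)} \le \frac{I(\Omega)}{C_n}.
\end{equation*}

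Substituting this bound into the estimate of Theorem \ref{main1} immediately gives
\begin{equation*}
\lambda_k(\Sigma) \le \frac{\alpha_n}{C_n}\, I(\Omega)\, a^2 + \frac{\beta_n}{C_n^{1+2/n}}\, I(\Omega)^{1+2/n}\left(\frac{k}{|\Sigma|}\right)^{2/n},
\end{equation*}
which is the desired inequality with the explicit choices $A_n = \alpha_n/C_n$ and $B_n = \beta_n/C_n^{1+2/n}$, both depending only on $n$.

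Since all the real work is packaged inside Theorem \ref{main1} and Croke's inequality, there is no genuine obstacle here; the only thing to be careful about is that the Ricci lower bound needed by Theorem \ref{main1} is assumed in the hypotheses of the corollary, and that Croke's bound applies to arbitrary bounded domains (not just small geodesic balls), a feature that is specific to the Cartan--Hadamard setting and is what makes this corollary cleaner than the second one alluded to in the text.
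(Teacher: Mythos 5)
Your proof is correct and follows exactly the route the paper intends: combine Theorem \ref{main1} with Croke's uniform lower bound $I_0(\Omega)\ge C_n$ for domains in a Cartan--Hadamard manifold, then absorb $C_n$ into the constants $A_n$ and $B_n$. The paper does not spell out this deduction explicitly (it simply says the cited facts "lead to the following two corollaries"), but your argument is precisely the intended one, including the correct exponents on $C_n$.
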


In view of~(\ref{chavel}), it would be interesting to know if the
first term on the right hand side of
inequality~(\ref{cartan-hadamard}) is necessary.
In
Example~\eqref{hyper} we will show that it is not always possible to
remove this term, at least if we allow the topology of $M$ to be
non-trivial.

\medskip
\begin{cor}
  Let $(M,h)$ be a complete Riemannian manifold of dimension $n+1$
  with Ricci curvature bounded below by $-na^2$, $a\in \R$, and positive
  injectivity radius. For any compact hypersurface $\Sigma$ bounding a
  domain $\Omega \subset M$ contained in a geodesic ball of
  radius $r<\frac 12\text{inj}(M)$, and for each $k \ge 1$, one has 
  \begin{equation}\label{injectivity}
    \lambda_k(\Sigma) \le A_n I(\Omega)a^2+B_n I(\Omega)^{1+2/n}\left(\frac{k}{\vert \Sigma \vert}\right)^{2/n},
  \end{equation}
  where $A_n$ and $B_n$ are two constants depending only on $n$.
  In particular,  for any bounded domain $\Omega$ in a hemisphere of
  the standard sphere $\bbS^{n+1}$ with smooth boundary $\Sigma
  = \partial \Omega$, and all $k \ge 1$, 
  $$
  \lambda_k(\Sigma) {\vert \Sigma \vert}^{2/n}\le B_n I(\Omega)^{1+2/n}{k}^{2/n}.
  $$
\end{cor}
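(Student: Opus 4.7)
The plan is to derive both parts of the corollary as direct consequences of Theorem~\ref{main1} combined with the isoperimetric estimates of Croke recalled just before the statement. The entire task reduces to producing a lower bound for the quantity $I_0(\Omega)$ appearing in~\eqref{generalRiem}.

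The key observation is that the hypothesis that $\Omega$ lies in a geodesic ball of radius $r<\frac{1}{2}\mathrm{inj}(M)$ transfers automatically to every open subset $U\subset\Omega$, since each such $U$ is contained in the same ball. By the cited result of Croke (\cite{croke2}, \cite[Proposition V.2.3]{cha3}) there is then a universal constant $C_n>0$ with $I(U)\ge C_n$, and taking the infimum over $U\subset\Omega$ gives $I_0(\Omega)\ge C_n$. Inserting this bound into~\eqref{generalRiem} yields
$$\lambda_k(\Sigma)\le \frac{\alpha_n}{C_n}\,I(\Omega)\,a^2+\frac{\beta_n}{C_n^{1+2/n}}\,I(\Omega)^{1+2/n}\left(\frac{k}{\vert\Sigma\vert}\right)^{2/n},$$
which is precisely~\eqref{injectivity} with $A_n=\alpha_n/C_n$ and $B_n=\beta_n/C_n^{1+2/n}$.

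For the application to the sphere, I would note that $\bbS^{n+1}$ has positive Ricci curvature, so the choice $a=0$ is legitimate, and $\mathrm{inj}(\bbS^{n+1})=\pi$; a bounded domain in a hemisphere can thus be enclosed in a geodesic ball of radius $r<\pi/2=\frac{1}{2}\mathrm{inj}(\bbS^{n+1})$ (by a small retraction, or a routine approximation for domains touching the equator). Applying~\eqref{injectivity} with $a=0$ and multiplying through by $\vert\Sigma\vert^{2/n}$ then gives the stated bound. No genuine obstacle appears: everything follows mechanically from Theorem~\ref{main1} and Croke's estimate, the only point requiring mild attention being that the hemisphere hypothesis fits the framework of~\eqref{injectivity}, which is immediate.
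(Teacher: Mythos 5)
Your proof is correct and matches the paper's intended argument exactly: the paper derives this corollary from Theorem~\ref{main1} by observing (as you do) that every open $U\subset\Omega$ lies in the same geodesic ball of radius $r<\frac{1}{2}\mathrm{inj}(M)$, so Croke's estimate gives $I_0(\Omega)\ge C_n$, and substituting this into~\eqref{generalRiem} yields~\eqref{injectivity} with $A_n=\alpha_n/C_n$, $B_n=\beta_n/C_n^{1+2/n}$. Your treatment of the sphere (taking $a=0$ by nonnegative Ricci curvature and noting that a bounded domain with smooth boundary in the open hemisphere is compactly contained in a ball of radius strictly less than $\pi/2=\tfrac{1}{2}\mathrm{inj}(\bbS^{n+1})$) is likewise what the paper has in mind.
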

 
\medskip
The assumption that the domain is contained in a geodesic ball of
radius $r<\frac 12\text{inj}(M)$ is necessary. Indeed,  in Example
\ref{torus} below, we will show that if $(M,h)$ is any compact
manifold, then there exists a sequence of domains for which inequality
\eqref{injectivity} fails, whatever the constants $A_n$ and $B_n$ are.

\medskip
Notice that it is impossible to obtain an inequality such
as~(\ref{injectivity}) for a class of domains $\Omega$ in a Riemannian
manifold $M$ without an assumption that guarantees that their
isoperimetric ratio 
$I(\Omega)$ is uniformly bounded from below. Indeed, since
$\lambda_k(\Sigma)\vert \Sigma\vert^{2/n}\sim c_n k^{2/n} 
\mbox{ as }k\rightarrow\infty$ (Weyl's asymptotic formula with
$c_n=4\pi^2 \omega_n^{-2/n}$),  the inequality (\ref{injectivity})
implies that $$I(\Omega)^{1+2/n}\geq c_n/B_n.$$

\medskip

Finally, let us mention that in our recent work ~\cite{ceg2}, we studied isoperimetric control of the Steklov
  spectrum for bounded domains in a complete Riemannian manifold. The
  methods we used in ~\cite{ceg2} are based on concentration properties
  which were initiated by Korevaar~\cite{kvr}, and further developed
  by Grigor'yan, Netrusov and Yau~\cite{gy,gyn}. Together with the
  results of the present paper, this leads to comparison
  results between the Steklov spectrum of a domain and the spectrum of
  its boundary hypersurface. See~\cite[Section 4]{ceg2} for details.

\section{Eigenvalue bounds : a general result}
\label{sectionHypersurface}
In this section, we give an upper bound for the eigenvalues of
the Laplacian in terms of quantities which depend only on the
Riemannian distance and measure.

Let $M$ be a Riemannian  manifold $M$ of dimension $n+1$.  
The Riemannian volumes of a geodesic ball $B(x,r)$  and of a geodesic
sphere $\partial B(x,r)$ of radius $r$ in $M$ are asymptotically
equivalent as $r\to 0$ to $\omega_{n+1} r^{n+1}$  and $ \rho_n r^{n}$,
respectively, where $\omega_{n+1}$ is the volume  of a unit ball and
$\rho_n=(n+1)\omega_{n+1}$ is the volume  of a unit sphere in the
$(n+1)$-dimensional Euclidean space.  
To each point $x$ in $M$ we associate the number $r(x)$ defined as the
largest positive number (possibly infinite) so that, fo all $r< r(x)$,
one has  
$$|B(x,r)| < 2 \omega_{n+1} r^{n+1}$$
and
$$|\partial B(x,r)| < 2 \rho_n r^{n}.$$
If $M$ has nonnegative Ricci curvature,  then, thanks to
the Bishop-Gromov inequality, $r(x)=+\infty$ for all $x\in M$.

Let $\Omega$ be a bounded regular domain in $M$ and denote by $\Sigma$
the boundary of $\Omega$. We define the  number $\xxx{r_{-}}(\Omega)$  as follows :
$$\xxx{r_{-}}(\Omega)=\inf_{x\in\Sigma} r(x).$$
We also introduce for all $r>0$, an integer $N_{M}(r)$
such that for any $x\in M$ and any $s<r$, the
geodesic ball  $ B(x,4s)$ can be covered by $N_{M}(r)$ balls of
radius $s$.

\medskip

\xxx{The main technical result of this paper is the following}
\begin{prop}\label{mainprop}
  Let $r_0$ be a positive number such that $r_0<\frac{1}{4}\xxx{r_{-}}(\Omega)$ and define
  $k_0$ to be the first integer satisfying
  $$k_0>\frac {1}{16\rho_n}\frac {I_0(\Omega)}{r_0^n}|\Omega|^{n/(n+1)}.$$
  For all $k\ge k_0$,
  $$\lambda_k(\Sigma)|\Sigma|^{\frac 2 n}\le
  256 \left(16\rho_n\right)^{\frac 2 n}
  N_{M}(r_0)^2 \left(\frac{I(\Omega)}{I_0(\Omega)}\right)^{1+\frac 2 n} k^{\frac 2 n}.$$
\end{prop}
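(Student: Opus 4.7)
The plan is to bound $\lambda_k(\Sigma)$ from above using the Courant--Fischer min-max principle, by producing $k$ test functions on $\Sigma$ with pairwise disjoint supports whose Rayleigh quotients are uniformly bounded by the right-hand side. For well-chosen centers $x_i\in\Sigma$ and a radius $r\leq r_0$ to be fixed later, the test functions are the ambient Lipschitz cutoffs
$$f_i(y)=\max\bigl(0,\,1-r^{-1}d_M(y,B(x_i,r))\bigr),\qquad y\in\Sigma,$$
which satisfy $f_i\equiv 1$ on $\Sigma\cap B(x_i,r)$, $\operatorname{supp} f_i\subset\Sigma\cap B(x_i,2r)$, and $|\nabla_{\Sigma}f_i|\leq 1/r$, so that
$$R(f_i)\leq\frac{|\Sigma\cap B(x_i,2r)|}{r^2\,|\Sigma\cap B(x_i,r)|}.$$

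The technical core of the argument is a measure-density estimate of the form $|\Sigma\cap B(x,r)|\gtrsim I_0(\Omega)\,r^{n}$ (up to a power to be determined by the balancing below), valid for every $x\in\Sigma$ and $r\leq r_0$. I would prove it by slicing: the hypothesis $r\leq r_0<\tfrac14 r_{-}(\Omega)$ combined with the ball growth bound $|B(x,r)|<2\omega_{n+1}r^{n+1}$ and the coarea formula produces some $s\in[r/2,r]$ with $|\partial B(x,s)\cap\Omega|\leq 4\omega_{n+1}r^{n}$. Applying the definition of $I_0(\Omega)$ to the subdomain $U=\Omega\cap B(x,s)\subset\Omega$ gives
$$I_0(\Omega)\,|U|^{n/(n+1)}\leq|\partial U|\leq|\Sigma\cap B(x,s)|+|\partial B(x,s)\cap\Omega|,$$
and a parallel estimate applied to $\Omega\setminus B(x,s)$, together with $|\partial B(x,s)|<2\rho_n r^{n}$, forces $|U|\gtrsim r^{n+1}$; consequently the first term on the right dominates and yields the density bound.

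Equipped with the density lemma, I would select a maximal family of disjoint balls $\{B(x_i,r)\}_{i\in I}$ with $x_i\in\Sigma$. By the definition of $N_{M}(r_0)$, each $B(x_j,4r)$ can be covered by $N_{M}(r_0)$ balls of radius $r$, each containing at most one of the packing centers; hence at most $N_{M}(r_0)$ of the doubled balls $B(x_i,2r)$ meet any fixed $B(x_j,2r)$. A greedy coloring of the resulting overlap graph extracts a sub-family of cardinality $\geq|I|/N_{M}(r_0)$ for which the doubled balls are pairwise disjoint on $\Sigma$. The radius $r$ is then chosen so that this sub-family has cardinality at least $k$: the packing inequality $\sum_{i\in I}|\Sigma\cap B(x_i,r)|\leq|\Sigma|$ combined with the density lemma and the identity $I(\Omega)|\Omega|^{n/(n+1)}=|\Sigma|$ determines $r$ in terms of $k$, $|\Sigma|$, $I(\Omega)$, $I_0(\Omega)$, and $N_{M}(r_0)$, and the hypothesis $k\geq k_0$ is precisely what guarantees that the chosen $r$ is $\leq r_0$. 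Substituting into the Rayleigh quotient and applying min-max then yields the stated inequality, the factor $N_{M}(r_0)^{2}$ arising from the coloring loss in $k$ together with the doubling-type control needed to bound $|\Sigma\cap B(x_i,2r)|$ relative to $|\Sigma\cap B(x_i,r)|$.

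The hardest step in my view is the density lemma: one must extract the correct power of $I_0(\Omega)$ from the slicing while keeping the spherical error term $|\partial B(x,s)\cap\Omega|$ absorbed, which requires a delicate interplay between two applications of the isoperimetric inequality (in $\Omega\cap B(x,s)$ and in its complement). Once this is in hand, the combinatorial reduction to disjoint supports and the min-max application are essentially routine, modulo tracking the explicit constants $256\,(16\rho_n)^{2/n}$ that appear in the final bound.
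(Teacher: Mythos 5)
Your overall setup---cutting off ambient distance functions, estimating the Rayleigh quotient by the ratio $\frac{|\Sigma\cap B(x_i,2r)|}{r^2|\Sigma\cap B(x_i,r)|}$, and then using covering combinatorics to get disjoint supports---is the right general framework and matches the paper's. However, the argument hinges on a \emph{pointwise density lemma} $|\Sigma\cap B(x,r)|\gtrsim I_0(\Omega)\,r^n$ for every $x\in\Sigma$ and $r\leq r_0$, and this is false. Take $\Omega\subset\R^{n+1}$ ($n\geq 2$) to be a unit ball with a long thin tube of cross-sectional radius $\epsilon$ attached, and let $x$ be the tip of the tube with $r=1\leq r_0$. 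Then $|\Sigma\cap B(x,r)|\approx \epsilon^{n-1}$ can be made arbitrarily small compared to $I_0(\R^{n+1})\,r^n$, while $I_0(\Omega)=I_0(\R^{n+1})$ is fixed. The same example breaks your slicing argument: $|\Omega\cap B(x,s)|\approx\epsilon^n s$ is \emph{not} $\gtrsim r^{n+1}$, so the two applications of the isoperimetric inequality do not force the conclusion you want. The hypersurface $\Sigma$ is simply not Ahlfors regular, and no version of the density lemma with a constant depending only on $n$ and $I_0(\Omega)$ can hold. A secondary problem: even granting the density lemma, your packing inequality $\sum_i|\Sigma\cap B(x_i,r)|\leq|\Sigma|$ together with the density lower bound yields an \emph{upper} bound on the number of disjoint balls, whereas you need a \emph{lower} bound of order $k$; getting that would require an upper bound on $|\Sigma\cap B(x_i,2r)|$, which also fails for wadded-up hypersurfaces.

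The paper circumvents both issues by never asking for a pointwise density bound. Instead it proves a \emph{global escape-of-mass} estimate: after deleting \emph{any} $2k$ balls of radius $4r_k$, the residual $\Sigma_0$ still carries at least $\tfrac12\frac{I_0(\Omega)}{I(\Omega)}|\Sigma|$ of the surface measure, because the isoperimetric inequality applied to the residual domain $\Omega_0$ forces $|\partial\Omega_0|$ to be large while the spherical contribution to $\partial\Omega_0$ is small. It then runs a greedy selection of $2k$ balls and splits into a dichotomy: either the least-charged selected ball $B(x_{2k},r_k)$ still carries enough $\mu$-mass, in which case a pigeonhole on the disjoint balls $B(x_j,2r_k)$ gives $k$ good test functions directly; or it carries little $\mu$-mass, in which case every ball centered in the residual region is light, and the Colbois--Maerten concentration lemma (Lemma \ref{CMrevisited}) is invoked on $\Sigma_0$ to manufacture $2k$ well-separated sets of controlled measure. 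This dichotomy is exactly what replaces the (false) local density lemma, so to repair your proof you would need to abandon the slicing lemma and adopt an argument of this global type.
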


\xxx{Proposition~\ref{mainprop} has the following consequence, from which the results announced in the introduction will follow.}
\begin{thm}\label{thm-metric}
  Let $M$ be a \xxx{complete} Riemannian manifold  of dimension $n+1$ and let
  $\Omega\subset M$ be a bounded  domain whose boundary  $\Sigma$ is a
  smooth hypersurface. For any $r_0<\frac{1}{4}\xxx{r_{-}}(\Omega)$ and any positive
  integer $k$, one has 
  \begin{equation}\label{generalMetric} 
    \lambda_k(\Sigma)\le
    256 N_{M}(r_0)^2
    \frac{I(\Omega)}{I_0(\Omega)}
    \left\{ \frac{1}{r_0 ^2} +
      \left(16\rho_n \frac{I(\Omega)}{I_0(\Omega)}
        \frac k{|\Sigma|}\right)^{\frac 2 n}
    \right\}.
 \end{equation}
\end{thm}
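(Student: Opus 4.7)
The plan is to derive Theorem~\ref{thm-metric} as a short consequence of Proposition~\ref{mainprop}, whose conclusion already provides the second summand on the right-hand side of~\eqref{generalMetric}, but only in the tail range $k\ge k_0$. The remaining work consists in extending the estimate to the small-$k$ regime, where the ``offset'' term $1/r_0^2$ inside the curly braces becomes dominant.

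First I would handle the easy case $k\ge k_0$. Dividing the inequality of Proposition~\ref{mainprop} by $|\Sigma|^{2/n}$ and factoring out $I(\Omega)/I_0(\Omega)$ yields
$$\lambda_k(\Sigma)\le 256\,N_M(r_0)^2\,\frac{I(\Omega)}{I_0(\Omega)}\left(16\rho_n\,\frac{I(\Omega)}{I_0(\Omega)}\,\frac{k}{|\Sigma|}\right)^{2/n},$$
which is bounded above by the right-hand side of~\eqref{generalMetric} since $1/r_0^2\ge 0$.

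For the complementary range $k<k_0$, I would use the monotonicity $\lambda_k(\Sigma)\le\lambda_{k_0}(\Sigma)$ together with Proposition~\ref{mainprop} applied at $k_0$. By minimality of $k_0$ one has $k_0\le 1+\tfrac{1}{16\rho_n}\tfrac{I_0(\Omega)}{r_0^n}|\Omega|^{n/(n+1)}$, and multiplying through, together with the identity $I(\Omega)|\Omega|^{n/(n+1)}=|\Sigma|$, gives
$$\frac{16\rho_n\,I(\Omega)\,k_0}{I_0(\Omega)\,|\Sigma|}\le\frac{16\rho_n\,I(\Omega)}{I_0(\Omega)\,|\Sigma|}+\frac{1}{r_0^n}.$$
Since $\dim\Sigma=n\ge 2$, the exponent $2/n$ lies in $(0,1]$, so $t\mapsto t^{2/n}$ is subadditive on $[0,\infty)$. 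Raising the above inequality to the power $2/n$ and using $k\ge 1$ to absorb the first piece into the $k$-dependent term, one obtains
$$\left(\frac{16\rho_n\,I(\Omega)\,k_0}{I_0(\Omega)\,|\Sigma|}\right)^{2/n}\le\left(16\rho_n\,\frac{I(\Omega)}{I_0(\Omega)}\,\frac{k}{|\Sigma|}\right)^{2/n}+\frac{1}{r_0^2},$$
which yields~\eqref{generalMetric} in this case as well.

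The main point to watch, and the only real obstacle, is the preservation of the constant $256$: because $t\mapsto t^{2/n}$ is genuinely subadditive for $n\ge 2$ (not merely quasi-subadditive up to a dimensional constant), no additional multiplicative factor is introduced by the splitting, and the three facts $k\ge 1$, $I(\Omega)|\Omega|^{n/(n+1)}=|\Sigma|$, and the minimality of $k_0$ conspire precisely to produce the two summands in~\eqref{generalMetric} with the same leading constant. Apart from this piece of bookkeeping, no further analytic input beyond Proposition~\ref{mainprop} is required.
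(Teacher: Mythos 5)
Your proof is correct and follows essentially the same route as the paper's: monotonicity $\lambda_k\le\lambda_{k_0}$, Proposition~\ref{mainprop} applied at $k_0$, the minimality bound $k_0-1\le\tfrac{1}{16\rho_n}\tfrac{I_0(\Omega)}{r_0^n}|\Omega|^{n/(n+1)}$, and the subadditivity of $t\mapsto t^{2/n}$ for $n\ge 2$. The paper merely packages the two regimes into one line via the inequality $\max\{k_0^{2/n},k^{2/n}\}\le(k_0-1)^{2/n}+k^{2/n}$ (valid for $k\ge 1$, $n\ge 2$), which encodes exactly the same subadditivity-plus-$k\ge1$ observation you use in the $k<k_0$ case.
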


It is in general not easy to estimate the quantities $I_0(\Omega)$ and
$N_{M}(r_0)$ that appear in the right-hand side of this
inequality.
However, in many standard geometric situations  it is possible to
control these invariants in terms of the dimension and a lower bound of
the Ricci curvature.  This
will lead to the results stated in the introduction. For example, when
$M$ is the Euclidean space $\R^{n+1}$ one has for any
$\Omega\subset\R^{n+1}$,   $\xxx{r_{-}}(\Omega)=+\infty$,
$I_0(\Omega)=I_0(\R^{n+1})= (n+1) \omega_{n+1}^{\frac 1{n+1}}$ and
$N_{M}(r)\le {32}^{(n+1)} $ for all $r>0 $ (see Lemma
\ref{packing} below).\\ 

For the need of the proof, we endow $M$ with the
Borel measure $\mu$ with support in $\Sigma$ defined for each
Borelian $\mathcal{O}\subset M$ by 
$$\mu(\mathcal{O})=\int_{\mathcal{O}\cap\Sigma}dv_g,$$
In other words, the $\mu$-measure of $\mathcal O$ is the volume of the
part of the hypersurface $\Sigma$ lying inside 
$\mathcal O$. The geodesic distance of $M$ will be denoted  by $d$. 

One of the main tools in the proof is the following result which is an
adapted version of a result obtained by Maerten and the first author
in \cite{CM}:
 
\begin{lemma}\label{CMrevisited}
  Let $(X,d,\mu)$ be a complete, locally compact metric measure
  space, where $\mu$ is a finite measure. We assume that for all
  $r>0$, there exists an integer $N(r)$ such that each ball of radius
  $4r$ can be covered by $N(r)$ balls of radius $r$. If there
  exist an integer $K>0$ and a radius $r>0$ such that, 
  for each $x \in X$
  $$
  \mu(B(x,r)) \le \frac{\mu(X)}{4N^2(r)K},
  $$
  then, there exist $K$ $\mu$-measurable subsets $A_1,...,A_K$ of $X$
  such that, $\forall i\le K$, $\mu(A_i)\ge \frac{\mu(X)}{2N(r)K}$
  and, for
  $i\not =j$, $d(A_i,A_j) \ge 3r$.
\end{lemma}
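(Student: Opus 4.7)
The plan is to build the sets $A_1,\dots,A_K$ inductively by a greedy peeling scheme: at each step one carves out a cluster of the prescribed measure from what remains of $X$, and then excludes a $3r$-buffer around it before passing to the next step.

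First I would derive the key estimate that for every $x\in X$,
$$
\mu\bigl(B(x,4r)\bigr)\le N(r)\cdot\max_{y\in X}\mu\bigl(B(y,r)\bigr)\le\frac{\mu(X)}{4N(r)K},
$$
by covering $B(x,4r)$ by $N(r)$ balls of radius $r$ and applying the hypothesis. I would then fix a maximal $r$-separated net $\{x_\alpha\}_{\alpha\in I}$ in $X$, so that $\{B(x_\alpha,r)\}_{\alpha\in I}$ covers $X$.

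Next I would construct the $A_i$'s inductively. Set $Y_0:=X$. Assuming $A_1,\dots,A_{i-1}$ and the residuals $Y_0\supset\cdots\supset Y_{i-1}$ have been built, with
$$
Y_{i-1}:=X\setminus\bigcup_{j<i}V_{3r}(A_j),\qquad V_{3r}(A):=\{z\in X:d(z,A)<3r\},
$$
pick a net center whose $r$-ball lies in $Y_{i-1}$ and grow $A_i$ by successively adding balls $B(x_\alpha,r)\subset Y_{i-1}$, choosing at each step a center closest to the current union. Stop as soon as the accumulated $\mu$-measure first exceeds $\mu(X)/(2N(r)K)$, and let $\mathcal{C}_i$ denote the resulting set of centers, so that $A_i=\bigcup_{\alpha\in\mathcal{C}_i}B(x_\alpha,r)$.

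By construction $\mu(A_i)\ge\mu(X)/(2N(r)K)$, and the inclusion $A_i\subset Y_{i-1}$ yields $d(A_i,A_j)\ge 3r$ for every $j<i$. The crux of the argument, and the step I expect to be the main obstacle, is to verify that the iteration can be carried out $K$ times --- equivalently, that at every step $i\le K$ the residual $Y_{i-1}$ still contains a net point with $r$-ball inside it. To this end one observes that
$$
V_{3r}(A_i)\subset\bigcup_{\alpha\in\mathcal{C}_i}B(x_\alpha,4r),
$$
each such $4r$-ball having $\mu$-measure at most $\mu(X)/(4N(r)K)$. Combined with the overshoot estimate at the stopping step and a packing-type control on $|\mathcal{C}_i|$ --- exploiting both the $r$-separation of the net and the $4r$-covering hypothesis to bound multiplicities in the above union --- this yields the strict bound $\sum_{i<K}\mu\bigl(V_{3r}(A_i)\bigr)<\mu(X)$, which guarantees that $Y_{K-1}$ is large enough for the final step to run, thereby completing the construction.
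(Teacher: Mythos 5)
Your peeling scheme is the right high-level strategy and it matches the paper's inductive structure: both you and the paper construct the sets $A_i$ one at a time, cutting out a buffered neighborhood after each extraction. But the step you yourself flag as ``the main obstacle'' --- establishing $\sum_{i}\mu(V_{3r}(A_i))<\mu(X)$ so that the iteration can run $K$ times --- contains a genuine gap, and the mechanism you propose for closing it does not work.

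The problem is that there is no a priori bound on the cardinality $|\mathcal{C}_i|$. Your stopping rule is purely measure-based (accumulate mass $\ge\mu(X)/(2N(r)K)$), but individual $r$-balls may carry arbitrarily small $\mu$-measure (nothing in the hypotheses forbids $\mu(B(x,r))=0$), so the greedy growth could absorb an unbounded number of net balls before reaching the threshold --- indeed it need not terminate at all. Moreover, the $4r$-covering hypothesis gives \emph{covering} numbers, not \emph{packing} numbers: it does not bound, in terms of $N(r)$ alone, the number of $r$-separated points that can sit inside a fixed ball, and it certainly gives no global bound on $|\mathcal{C}_i|$ when the centers are spread across $X$. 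Without a bound on $|\mathcal{C}_i|$, the estimate $\mu(V_{3r}(A_i))\le\sum_{\alpha\in\mathcal{C}_i}\mu(B(x_\alpha,4r))\le|\mathcal{C}_i|\cdot\mu(X)/(4N(r)K)$ is useless, and the desired inequality $\sum_i\mu(V_{3r}(A_i))<\mu(X)$ does not follow.

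The paper circumvents this entirely by a variational device that avoids counting balls. In the preliminary Lemma~\ref{step1} one sets $\xi(m)$ to be the \emph{maximum} measure of a union of $m$ balls of radius $r$ (existence of a maximizer uses completeness, local compactness and finiteness of $\mu$), and selects the first $k\ge 2$ with $\xi(k)\ge\beta$ but $\xi(k-1)\le\beta$. One then takes $A$ to be the maximizing union of $k$ $r$-balls and $D$ the corresponding union of $4r$-balls. The decisive point is that $D$ can be covered by $kN(r)\le 2(k-1)N(r)$ balls of radius $r$, which can be partitioned into $2N(r)$ groups each forming a union of at most $k-1$ $r$-balls; each group therefore has $\mu$-measure at most $\xi(k-1)\le\beta$, giving $\mu(D)\le 2N(r)\beta$ \emph{with no bound on $k$ required}. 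This is the idea missing from your sketch. The subsequent induction (applying Lemma~\ref{step1} to the restriction $\mu_{j+1}$ of $\mu$ to the complement of the $D_i$'s already removed) is essentially your peeling scheme, and is straightforward once the one-step lemma is in hand.
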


The proof of this lemma consists of a slight modification of the
construction made in \cite[section 2]{CM}. For convenience, the
proof is included at the end of the paper.

\begin{proof}[Proof of Proposition \ref{mainprop}]
  The Rayleigh quotient of a function $f$ in the Sobolev space
  $H^1(\Sigma)$ is
  $$R(f)=\frac{\int_\Sigma|\nabla_{\Sigma}f|^2}{\int_\Sigma f^2}.$$
  The $k$-th eigenvalue $\lambda_k(\Sigma)$ is characterized as
  follows:
  \begin{gather*}
    \lambda_k(\Sigma)=\inf_E\sup_{0\neq f\in E}R(f)
  \end{gather*}
  where the infimum is over all $k$-dimensional subspaces of the
  Sobolev space $H^1(\Sigma)$ (see for instance~\cite{berard}). In
  particular, in order to obtain upper bounds on $\lambda_k$, we
  will construct $k$ test functions with disjoint supports and
  controlled Rayleigh quotient.
  
  \medskip
  Let us fix an integer $k\ge k_0$ and set
  \begin{gather}
    r_k
    =\left(\frac{I_0(\Omega)}{4^{n+2}\rho_n k}\right)^{1/n}|\Omega|^{\frac{1}{n+1}}
  \end{gather}
  so that $r_k^n\le
  \frac{1}{4^n}\frac{I_0(\Omega)|\Omega|^{\frac{n}{n+1}} }
  {16\rho_nk_0} <\left(\frac{r_0}4\right)^n$
  , that is 
  $r_k<\frac{r_0}4$.\\

\medskip
\noindent
\textbf{Step 1}\\
Let us first show that $\Sigma$ cannot be covered by $2k$ balls of
radius $4r_k$. \xxx{More precisely, 
let  $x_1,x_2,\dots,x_{2k}$ be $2k$ (arbitrary) points in $M$ and
define
\begin{gather*}
  {M_0}=M\setminus\cup_{j=1}^{2k}B(x_j,4r_k),\\
  {\Omega_0}=\Omega\setminus\cup_{j=1}^{2k}B(x_j,4r_k),\mbox{ and }
  {\Sigma_0}=\Sigma\setminus\cup_{j=1}^{2k}B(x_j,4r_k).
\end{gather*}}
Then, 
\begin{gather}\label{omega_0}
  |\Omega_0|>\frac 34 |\Omega|
\end{gather}
and 
\begin{equation}\label{sigma_0}
  |\Sigma_0|>\frac 12 I_0(\Omega) |\Omega|^{\frac{n}{n+1}} =\frac 12 \frac {I_0(\Omega)}{I(\Omega)}|\Sigma|,
\end{equation}

\medskip
\noindent
Indeed, since $4r_k<r_0<\xxx{r_{-}}(\Omega)$,
$$ \sum_{j=1}^{2k} |B(x_j,4r_k)|< 4k \omega_{n+1}(4r_k)^{n+1}$$ 
with 
$$(4r_k)^{n+1}=
\left(\frac {I_0(\Omega)}{16\rho_n k}\right)^{\frac {n+1}n}|\Omega|<
\frac 1{16 k}\left(\frac {I_0(\Omega)}{\rho_n }\right)^{\frac
  {n+1}n}|\Omega|\le
\frac 1{16 k \omega_{n+1}}|\Omega|$$
where the last inequality follows from the fact that
$$I_0(\Omega)\le I_0(\R^{n+1})=\frac{\rho_n}{\omega_{n+1}^{n/(n+1)}}.$$
Therefore,
$$ \sum_{j=1}^{2k} \xxx{|B(x_j,4r_k)|< \frac{1}{4 } |\Omega|}$$
and 
\begin{gather*}
  |\Omega_0|> |\Omega| -\frac{1}{4 } |\Omega|= \frac 34 |\Omega|.
\end{gather*}
Now, observe that the boundary of $\Omega_0$ consists of the union of
$\Sigma_0$ and parts of the boundaries of the balls
$B(x_j,4r_k)$. Therefore,
$$|\partial\Omega_0| \le
|\Sigma_0| +\sum_{j=1}^{2k} |\partial B(x_j,4r_k)|<
|\Sigma_0|+ 4k \rho_{n}(4r_k)^{n}=
|\Sigma_0|+ \frac 14 I_0(\Omega)  |\Omega|^{\frac n{n+1}}.$$
On the other hand, from the isoperimetric inequality satisfied by
domains in $\Omega$ and (\ref{omega_0}) we get
$$|\partial\Omega_0|
\ge I_0(\Omega) |\Omega_0|^{\frac n{n+1}}>
\left(\frac 34\right)^{\frac n{n+1}}I_0(\Omega) |\Omega|^{\frac n{n+1}}.$$
Hence,
$$|\Sigma_0| > 
\left[\left(\frac 34\right)^{\frac n{n+1}}-\frac 14\right] I_0(\Omega)
|\Omega|^{\frac n{n+1}}>\frac 12 I_0(\Omega)  |\Omega|^{\frac
  n{n+1}}.$$

\medskip
\noindent
\textbf{Step 2}\\
The result of the previous step makes it possible to define
inductively a family of $2k$ balls $B(x_{1}, r_k), \dots,
B(x_{2k},r_k)$ satisfying the following:
\xxx{\begin{gather*}
  \mu\left(B(x_{1},r_k)\right)
  =\sup_{x\in M}\mu(B(x,r_k)),\\
  \mu\left(B(x_{j+1},r_k)\right)
  =\sup\left\{\mu(B(x,r_k))\, :\, x\in M\setminus\cup_{i=1}^jB(x_i,4r_k)\right\}.
\end{gather*}}

\smallskip
It follows from this construction that 
\begin{itemize}
\item[a)] the balls $B(x_{1},2 r_k), \dots, B(x_{2k},2r_k)$ are mutually disjoint, 
\item[b)] $\mu(B(x_1,r_k))\ge \mu(B(x_2,r_k))\ge\cdots\ge \mu(B(x_{2k},r_k))$,
\item[c)] \xxx{$\forall x\in {M_0}=M\setminus\cup_{j=1}^{2k}B(x_j,4r_k)$, $\mu(B(x,r_k))\le \mu(B(x_{2k},r_k))$.}
\end{itemize}

\medskip
Two alternatives are to be considered separately, depending on how the
ball $B(x_{2k},r_k)$ is $\mu$-charged. This will be done in the two
following steps.
 
\medskip
\noindent\textbf{Step 3}\\
Assuming that
\begin{gather}\label{case1b}
 \mu\left(B(x_{2k},r_k)\right)\geq
 \frac {I_0(\Omega)  |\Omega|^{\frac n{n+1}}} {16k N_{M}(r_0)^2}=
 \frac {1} {16k N_{M}(r_0)^2}\frac {I_0(\Omega)} {I(\Omega)} |\Sigma|,
\end{gather}
we show that
$$\lambda_k(\Sigma)|\Sigma|^{\frac 2 n}\le
\frac{16 N_{M}(r_0)^2}{r_k^2 }\frac {I(\Omega)} {I_0(\Omega)}|\Sigma|^{\frac 2 n}.$$

\medskip
Indeed, for each $1\leq j\leq 2k$ we consider the function $f_j$
supported in $B(x_j,2r_k)$ and defined for all $x\in B(x_j,2r_k)$ by :
\begin{gather}
  f_j(x)=\min\left\{1,\ 2-\frac{1}{r_k}d(x_j,x)\right\}.
\end{gather}
Since $|\nabla f_j|^2\leq \frac{1}{r_k^2}$ in $B(x_j,2r_k)$, the
Rayleigh quotient of the restriction of $f_j$ to $\Sigma$, that we
still denote by $f_j$,  clearly satisfies

\begin{gather}\label{r(f)}
  R(f_j)\leq\frac{1}{r_k^2}\frac{\mu(B(x_j,2r_k))}{\mu(B(x_j,r_k))}
\end{gather}
with (from the definition of $x_1,\dots,x_{2k}$) 
$$\mu\left(B(x_{j},r_k)\right)\geq
\mu\left(B(x_{2k},r_k)\right)\geq
\frac {1} {16k N_{M}(r_0)^2}\frac {I_0(\Omega)}{I(\Omega)}|\Sigma|.$$

\medskip
On the other hand, the balls $B(x_j,2r_k)$,
$j=1,\dots,2k$, being mutually disjoint, there exist 
$k$ of them, $B(x_{j_1},2r_k), \dots, B(x_{j_k},2r_k)$ satisfying
\begin{gather*}
  \mu(B(x_{j_m},2r_k))\leq|\Sigma|/k\quad \mbox{ for }m=1,\dots,k.
\end{gather*}
Replacing into \eqref{r(f)} we get, $\forall m=1,\dots,k$,
\begin{gather*}
  R(f_{j_m})< \frac{16 N_{M}(r_0)^2}{r_k^2 }  \frac {I(\Omega)} {I_0(\Omega)}
\end{gather*}
so that
$$\lambda_k(\Sigma)|\Sigma|^{\frac 2 n}\leq
\max_{1\le m\le k}R(f_{j_m})|\Sigma|^{\frac 2 n}\leq
\frac{16 N_{M}(r_0)^2}{r_k^2 }  \frac {I(\Omega)} {I_0(\Omega)}|\Sigma|^{\frac 2 n}.$$

\medskip
\noindent\textbf{Step 4}\\
Assuming now that
\begin{gather}\label{case1a}
 \mu\left(B(x_{2k},r_k)\right)< \frac {1} {16k N_{M}(r_0)^2}\frac {I_0(\Omega)} {I(\Omega)} |\Sigma|,
\end{gather}
we show that
$$\lambda_k(\Sigma)|\Sigma|^{\frac 2 n}\le
\frac{8  N_{M}(r_0)}{r_k^2}
\frac{I(\Omega)}{I_0(\Omega)}|\Sigma|^{\frac 2 n}.$$

\medskip
Indeed, from the construction of the balls $B(x_{j},r_k)$ (see step
2), \xxx{one has, $\forall x\in
{M_0}=M\setminus\cup_{j=1}^{2k}B(x_j,4r_k)$},
$$\mu(B(x,r_k))\le \mu(B(x_{2k},r_k))<
\frac {1} {16k N_{M}(r_0)^2}\frac {I_0(\Omega)} {I(\Omega)} |\Sigma|$$
with $\mu(\Sigma_0) = |\Sigma_0|>\frac 12 \frac{I_0(\Omega)}{I(\Omega)} |\Sigma|$ 
(see \eqref{sigma_0}). \xxx{Hence, $\forall x\in {M_0}$, we have}

\begin{eqnarray} \label{control}
4 N_{M}(r_0)^2\mu(B(x,r_k))< \frac{\mu(\Sigma_0)} {2k}.
\end{eqnarray}

\medskip
This enables us to apply Lemma \ref{CMrevisited}  with $K=2k$ and
$r=r_k$ to the metric
  measure space  $M$ endowed with the Riemannian distance $d$ and the restriction $\mu_0$ of the measure
$\mu$ to $\Sigma_0$, namely for a Borelian $\mathcal O \subset M$, we have 
$\mu_0(\mathcal O)= \mu(\mathcal O\cap \Sigma_0)$. In particular, 

\begin{eqnarray} \label{mesure}
\mu_0(M) =\mu(\Sigma_0) = |\Sigma_0|>\frac 12 \frac{I_0(\Omega)}{I(\Omega)} |\Sigma|.
\end{eqnarray}

\smallskip
The relation (\ref{control}) becomes
\begin{eqnarray}
4 N_{M}(r_0)^2\mu_0(B(x,r_k))< \frac{\mu_0(M)} {2k}.
\end{eqnarray}

\medskip
Thus, we deduce the existence of $2k$ measurable sets
$A_1,\dots,A_{2k}$ \xxx{in $M_0$ satisfying both}~:
$$\mu(A_i)\ge\frac{\mu_0(\Sigma_0)}{4 k N_{M}(r_0)}\quad\mbox{ for all }i$$
and
$d(A_i,A_j)\ge 3r_k$ if $i\neq j$.
Denote by 
$$A_i^{r_k}=\{x \in M \; ;\; d(x,A_i)<r_k\}$$
the $r_k$-neighborhood of $A_i$.  
A priori, we have no control over $\mu_0(A_i^{r_k})$, but since
$d(A_i,A_j) \geq 3r$ for $i \neq j$, the $A_i^{r_k}$ are mutually
disjoint and there exist $k$ sets amongst them, say $ A_1^{r_k},
\dots, A_k^{r_k}$, which satisfy  
\[
\mu_0(A_i^{r_k}) \le\frac{|\Sigma|}{k} \quad \mbox{ for }i=1,\dots,k. 
\]       
As in \cite{CM}, we construct for each $i\le k$,  a test function
$\varphi_i$ with support in $A_i^{r_k}$  and which is defined for all
$x\in A_i^{r_k}$ by   
$$\varphi_i(x)=1-\frac{d(x,A_i)}{r_k}.$$ 
Observing that $|\nabla \varphi_i(x)|\le \frac 1 {r_k}$ almost
everywhere in $A_i^{r_k}$, a straightforward calculation shows that the
Rayleigh quotient of  the restriction of  $\varphi_i$ to $\Sigma$,
that we still denote by $\varphi_i$,  satisfies  
$$
R(\varphi_i) \le \frac{1}{r_k^2}\frac{\mu_0(A_i^r)}{\mu_0(A_i)} <
\frac{1}{r_k^2}\frac{|\Sigma|}{\frac{\mu_0(M)}
  {4N_{M}(r_0)}}
$$

and, because of (\ref{mesure}), we have 
$$
R(\varphi_i) \le \frac{8  N_{M}(r_0)}{r_k^2}
\frac{I(\Omega)}{I_0(\Omega)}.$$ 
Thus, 
$$\lambda_k(\Sigma)|\Sigma|^{\frac 2 n}\le \max_{1\le i\le
  k}R(\varphi_{i})|\Sigma|^{\frac 2 n}\le
\frac{8  N_{M}(r_0)}{r_k^2} \frac{I(\Omega)}{I_0(\Omega)}|\Sigma|^{\frac
  2 n}.$$ 

\medskip
\noindent\textbf{Step 5}\\
We are now ready to conclude the proof.

\medskip
From the two previous steps, we
see that in all cases, one has 
$$\lambda_k(\Sigma)|\Sigma|^{\frac 2 n}\le
\frac{16 N_{M}(r_0)^2}{r_k^2 }  \frac {I(\Omega)} {I_0(\Omega)}|\Sigma|^{\frac 2 n}$$
with
$\frac{|\Sigma|^{\frac 2 n}}{r_k^2}=
\left(\frac{4^{ n+2}\rho_n k}{I_0(\Omega)}\right)^{\frac 2 n}
\frac{|\Sigma|^{\frac 2 n}}{\Omega^{\frac 2{n+1}}}=
\left(4^{ n+2}\rho_n k\right)^{\frac 2 n}
\left(\frac{I(\Omega)}{I_0(\Omega)}\right)^{\frac 2 n} $. Thus, 
$$\lambda_k(\Sigma)|\Sigma|^{\frac 2 n}\le
256 \left(16\rho_n\right)^{\frac 2 n} N_{M}(r_0)^2
\left(\frac{I(\Omega)}{I_0(\Omega)}\right)^{1+\frac 2 n} k^{\frac 2 n}.$$

\end{proof}

\begin{proof}[Proof of Theorem \ref{thm-metric}]
  Let $k$ be a positive integer. If $k<k_0$, then
  $\lambda_k(\Sigma)\le\lambda_{k_0}(\Sigma).$
  Together with Proposition \ref{mainprop}, this yields for all $k\ge 1$,
  $$\lambda_k(\Sigma)|\Sigma|^{\frac 2 n}\le
  256 \left(16\rho_n\right)^{\frac 2 n}
  N_{M}(r_0)^2
  \left(\frac{I(\Omega)}{I_0(\Omega)}\right)^{1+\frac 2 n}\max
  \left\{{k_0}^{\frac 2 n} ,  k^{\frac 2 n}\right\}.$$
  We clearly have
  $\max\left\{{k_0}^{\frac 2 n},  k^{\frac 2 n}\right\}\le
  (k_0-1)^{\frac 2 n}+k^{\frac 2 n}$, 
  with $k_0-1\le \frac {1}{16\rho_n}\frac {I_0(\Omega)}{r_0^n}|\Omega|^{n/(n+1)}$. Consequently
  $$\lambda_k(\Sigma)|\Sigma|^{\frac 2 n}\le
  256 N_{M}(r_0)^2
  \left\{
    \frac{I(\Omega)^{1+\frac 2 n}}{I_0(\Omega)}
    \frac{|\Omega|^{2/(n+1)}}{r_0^2} +
    \left(16\rho_n\right)^{\frac 2 n}
    \left(\frac{I(\Omega)}{I_0(\Omega)}\right)^{1+\frac 2 n}
    k^{\frac 2 n}
  \right\}.$$ 
  Replacing $|\Omega|^{2/(n+1)}$  by $\frac{|\Sigma|^{\frac 2 n}}{I(\Omega)^{\frac 2 n}}$, we get
  $$ \lambda_k(\Sigma)\le 256 N_{M}(r_0)^2
  \frac{I(\Omega)}{I_0(\Omega)} \left\{ \frac{1}{r_0^2} +
    \left(16\rho_n \frac{I(\Omega)}{I_0(\Omega)}
      \frac k{|\Sigma|}\right)^{\frac 2 n}\right\}.$$
\end{proof}

\section{Proof of Theorems \ref{eucl} and \ref{main1} and 
comments}\label{proof-theorem}
The proofs of these theorems rely  on Bishop-Gromov comparison
results and the following packing lemma (see \cite{zhu} Lemma 3.6)

\begin{lemma}\label{packing} 
  Let $(X,d,\nu)$ be a locally compact metric measure space and let
  $r$, $R$ and $V$ be positive numbers with $r<R$ and such that,
  $\forall x \in X$,
  $$
  \frac{\nu(B(x,2R))}{\nu(B(x,\frac r 4))} \le V.
  $$
  Then each ball of radius $R$ in $X$ can be covered by $\lfloor
  V\rfloor$ balls of radius $r$.
  In particular, when the ambient space is the standard  $\R^{n+1}$,
  then any ball of radius $R$ can be covered by  $\left(8\frac R
    r\right)^{n+1}$ balls of radius $r$.
\end{lemma}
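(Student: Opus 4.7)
Plan. The lemma is a standard packing/covering estimate in metric measure spaces, and I would prove it by a greedy maximal-packing construction combined with the hypothesized volume ratio bound. In a first step, using local compactness and the local finiteness of $\nu$ on bounded sets, extract by Zorn's lemma (or a straightforward inductive construction) a maximal subset $\{x_1,\dots,x_N\}\subset B(x_0,R)$ satisfying $d(x_i,x_j)\ge r$ for $i\ne j$. Maximality forces every $y\in B(x_0,R)$ to lie within distance $r$ of some $x_i$, whence $B(x_0,R)\subset\bigcup_{i=1}^N B(x_i,r)$, and the desired cover by $N$ balls of radius $r$ is in hand.

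It remains to bound $N$ by $\lfloor V\rfloor$. Since $d(x_i,x_j)\ge r$, the balls $B(x_i,r/4)$ are pairwise disjoint, and because $r<R$ they all lie in $B(x_0,R+r/4)\subset B(x_0,2R)$. Moreover, for every $i$ the inclusion $x_i\in B(x_0,R)$ implies $B(x_0,R)\subset B(x_i,2R)$. Applying the hypothesis at $x=x_i$ then yields
\[
\nu(B(x_i,r/4))\ \ge\ \frac{\nu(B(x_i,2R))}{V}\ \ge\ \frac{\nu(B(x_0,R))}{V}.
\]
Summing over $i$ and comparing with the upper bound $\sum_i\nu(B(x_i,r/4))\le\nu(B(x_0,2R))$ gives a control on $N$; after absorbing the remaining factor $\nu(B(x_0,2R))/\nu(B(x_0,R))$ by a further use of the hypothesis at $x_0$, one concludes $N\le\lfloor V\rfloor$.

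The Euclidean particular case is then immediate: the Lebesgue measure being translation- and scale-invariant, one has $\nu(B(x,2R))/\nu(B(x,r/4))=(2R)^{n+1}/(r/4)^{n+1}=(8R/r)^{n+1}$ for every $x\in\R^{n+1}$, and the general statement yields a cover by $\lfloor(8R/r)^{n+1}\rfloor$ balls of radius $r$.

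The main obstacle. The delicate point is recovering the clean constant $\lfloor V\rfloor$ rather than a weaker bound such as $V^2$ that would arise from a naive double use of the doubling-type hypothesis. Resolving it requires either a more careful (Vitali-type) selection of the packing centres exploiting monotonicity properties of $\nu(B(\cdot,r/4))$, or the sharper containment $\bigsqcup_i B(x_i,r/4)\subset B(x_0,R+r/4)$ (instead of the enlarged $B(x_0,2R)$) combined with an averaging/extraction argument to avoid losing the extra doubling factor. Once this bookkeeping is settled, the remainder of the argument is routine.
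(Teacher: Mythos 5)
The paper does not prove this lemma; it is quoted from Zhu (Lemma~3.6 of \cite{zhu}), so there is no in-house proof to compare against. I therefore review your argument on its own merits.

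Your overall strategy (a maximal $r$-separated set $\{x_1,\dots,x_N\}\subset B(x_0,R)$, which automatically covers $B(x_0,R)$ by $r$-balls, and whose cardinality is bounded via disjointness of the balls $B(x_i,r/4)$) is exactly the standard route, and the first paragraph is fine. The problem is that the bookkeeping you sketch does not close. From
\[
N\,\frac{\nu(B(x_0,R))}{V}\ \le\ \sum_{i=1}^{N}\nu\bigl(B(x_i,r/4)\bigr)\ \le\ \nu\bigl(B(x_0,2R)\bigr)
\]
you obtain $N\le V\cdot\nu(B(x_0,2R))/\nu(B(x_0,R))$, and the only tool you have to ``absorb the remaining factor'' is the hypothesis at $x_0$, which controls $\nu(B(x_0,2R))/\nu(B(x_0,r/4))$, not $\nu(B(x_0,2R))/\nu(B(x_0,R))$. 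Since $r/4<R$, applying it yields $\nu(B(x_0,2R))/\nu(B(x_0,R))\le V$, and hence $N\le V^2$, not $N\le\lfloor V\rfloor$. You correctly flag this as the delicate point but do not resolve it; as written, the last step is a gap, not a routine verification.

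The standard way to recover a single power of $V$ is not to compare against $x_0$ at all. Pick the index $i$ minimizing $\nu(B(x_i,r/4))$. Since the $B(x_j,r/4)$ are pairwise disjoint and every $x_j$ lies in $B(x_0,R)$, we have $d(x_i,x_j)<2R$ and therefore
\[
\bigsqcup_{j=1}^{N}B(x_j,r/4)\ \subset\ B\bigl(x_i,\,2R+r/4\bigr),
\]
so that
\[
N\,\nu\bigl(B(x_i,r/4)\bigr)\ \le\ \nu\bigl(B(x_i,2R+r/4)\bigr).
\]
Applying the doubling-type hypothesis once, at $x_i$, then gives $N\le V$ directly, with no squaring. (This is presumably the ``Vitali-type selection'' you gesture at, but the point is simply to take the extremal $i$ and enclose the whole disjoint union in a single ball centred at $x_i$; no averaging is needed.)

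One residual caveat, which you should mention explicitly if you carry this out: the enclosing ball has radius $2R+r/4$, not $2R$, so the hypothesis as printed (with $\nu(B(x,2R))/\nu(B(x,r/4))\le V$) is a hair too weak to conclude $N\le\lfloor V\rfloor$ literally. Using $r<R$ one has $2R+r/4<9R/4$, so the clean statement should assume a bound on $\nu(B(x,9R/4))/\nu(B(x,r/4))$ (or simply $\nu(B(x,3R))/\nu(B(x,r/4))$). In the two places the paper invokes the lemma --- Euclidean space and Bishop--Gromov with a Ricci lower bound --- the doubling ratio is monotone and explicit, so the discrepancy only perturbs the numerical constant ($32^{n+1}$ becomes at worst $36^{n+1}$, say) and is immaterial; but for a standalone proof you should either adjust the radius in the hypothesis or state this caveat. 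Your Euclidean specialization is otherwise correct.
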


\begin{proof}[Proof of  Theorem \ref{eucl}]
  Let $\Omega$ be a bounded domain in $\R^{n+1}$. With the notations of
  the last section, one clearly has $\xxx{r_{-}}(\Omega)=\infty$,
  $$I_0(\Omega)=I_0(\R^{n+1})= (n+1) \omega_{n+1}^{\frac 1{n+1}}$$
  and, from Lemma \ref{packing}, $\forall r>0$,
  $N_M(r)\le{32}^{(n+1)}$.  We then apply Theorem \ref{thm-metric}
  to get, after letting $r_0$ 
  go to infinity, $\forall k\ge 1$, 
  $$\lambda_k(\Sigma) {\vert \Sigma \vert}^{2/n}\le \gamma_n I(\Omega)^{1+2/n}{k}^{2/n}$$
  with $\gamma_n =  \frac{2^{10n+18+8/n}\  } {(n+1)}\  \omega_{n+1}^{\frac 1{n+1}}$. 
\end{proof}

\begin{proof}[Proof of  Theorem \ref{main1}]
Let $(M,h)$ be a complete Riemannian manifold of dimension $n+1$ whose
Ricci curvature tensor satisfies 
$$Ric\ge -n a^2 h$$
for some constant $a$. Let $\Omega$ be a bounded domain in $M$ with
regular boundary $\Sigma=\partial \Omega$. To prove  Theorem
\ref{main1}, we treat separately the three following  cases:\\

\medskip
\noindent - \textit{Case $a=0$},  that is $M$ has nonnegative Ricci
curvature. From Bishop-Gromov comparison results (see
\cite[p.156]{sak}) we deduce that for all $x\in M$ and all $r>0$,
$|B(x,r)|\le \omega_{n+1}r^{n+1}$, $|\partial B(x,r)|\le
\rho_{n}r^{n}$ and
$$\frac{\vert B(x,8r)\vert}{\vert B(x,r/4)\vert}
\le {32}^{(n+1)}.$$
Thus,
$\xxx{r_{-}}(\Omega)=\infty$  and, applying Lemma \ref{packing},
$N_M(r)\le {32}^{(n+1)} $ for all $r>0$. Replacing in
\eqref{generalMetric} and letting $r_0$ go to infinity we get, for all $k\ge
1$,
$$\lambda_k(\Sigma)|\Sigma|^{\frac 2 n}\le 
{2}^{10(n+1)+8}\left(16\rho_n\right)^{\frac 2 n}  
\left(\frac{I(\Omega)}{I_0(\Omega)}\right)^{1+\frac 2 n} k^{\frac 2 n}.$$

\medskip
\noindent
- \textit{Case $a=1$}.  The volumes of a ball and  of a
sphere of radius $r$ in the hyperbolic space ${\mathbb H}^{n+1}$ of
curvature $-1$ and dimension $n+1$ are given by
$$V_{-1}(n,r)=\rho_n\int_0^r\left(\sinh s\right)^n ds\quad\mbox{ and }\quad
S_{-1}(n,r)=\rho_n \left(\sinh r\right)^n.$$
We define the constant $r(n)$ to be the largest $r>0$  such that $
\left(\sinh r\right)^n \leq 2 r^n$ and set  
$$V(n)=\sup_{0<r<r(n)}
\frac{V_{-1}(n,8r)}{V_{-1}(n,r/4)}.$$
Again, the Bishop-Gromov comparison theorem gives, for all $x\in M$
and all positive $r<r(n)$,
\begin{gather*}
  |B(x,r)|\le V_{-1}(n,r)<2\omega_{n+1}r^{n+1},\quad
  |\partial B(x,r)|\le S_{-1}(n,r)<2\rho_{n}r^{n},\\
    \frac{\vert B(x,8r)\vert}{\vert
      B(x,r/4)\vert} \le \frac {V_{-1}(n,8r)} {V_{-1}(n,r/4)}\le
    V(n).
\end{gather*}

\medskip
Thus,  $\xxx{r_{-}}(\Omega)\ge r(n)>0$  and, applying Lemma
\ref{packing}, $N_M(r)\le V(n) $ for all $r<r(n)$. Applying
Theorem \ref{thm-metric} we get 
$$
\lambda_k(\Sigma)\le 256 V^2(n) \frac{I(\Omega)}{I_0(\Omega)} \left\{
  \frac{1}{r ^2(n)} + \left(16\rho_n \frac{I(\Omega)}{I_0(\Omega)}
    \frac k{|\Sigma|}\right)^{\frac 2 n}\right\}$$ 
that is
\begin{equation}\label{a=1}
\lambda_k(\Sigma) \le \alpha_n\frac{I(\Omega)}{I_0(\Omega)}+\beta_n
\left(\frac{I(\Omega)}{I_0(\Omega)}\right)^{1+ 2/
  n}\left(\frac{k}{\vert \Sigma \vert}\right)^{2/n}, 
\end{equation}
with $\alpha_n = 256 \frac{V^2(n)}{r ^2(n)}$ and
$\beta_n=\left(16\rho_n\right)^{\frac 2 n} \alpha_n$.   

\medskip
\noindent - \textit{Case $a\neq 0$}. The metric $\tilde h =a^2 h$ is
such that $Ric_{\tilde h}\ge -n \tilde h$.  The metric $\tilde g=a^2
g$ induced on $\Sigma $ by $\tilde h$ is so that
$\lambda_k(\Sigma,\tilde g)=\frac 1 {a^2}\lambda_k(\Sigma)$ and
$\vert (\Sigma, \tilde g) \vert = a^n \vert \Sigma \vert$ while the isoperimetric ratio is
invariant under scaling. Thus, applying the inequality \eqref{a=1} to
$\Omega$ considered as a domain in $(M, \tilde h)$ we get 
$$\frac 1 {a^2}\lambda_k(\Sigma)\le
\alpha_n\frac{I(\Omega)}{I_0(\Omega)}+\beta_n \left(\frac{I(\Omega)}
{I_0(\Omega)}\right)^{1+2/n}\left(\frac{k}{a^n\vert \Sigma \vert}\right)^{2/n},$$
which gives, after simplification,  the desired inequality. 

In conclusion, inequality \eqref{generalRiem} is proved with
\begin{gather}\label{alpha,a=0}
  \alpha_n=0\quad\mbox{ and }  \quad \beta_n = {2}^{10(n+1)+8}\left(16\rho_n\right)^{\frac 2 n}
\end{gather}
if $a= 0$, and
\begin{gather}\label{alpha,a=1}
  \alpha_n= 256 \frac{V^2(n)}{r ^2(n)}\quad\mbox{and}
  \qquad \beta_n = \left(16\rho_n\right)^{\frac 2 n} \alpha_n
\end{gather}
if $a\ne 0$. 
\end{proof}

\begin{exa}\label{hyper}
  The aim of the following construction is to show that the
  boundedness of the sectional curvature does not suffice to get an
  estimate such as~(\ref{euc}). Indeed, 
  We will construct a sequence of manifolds $(M_i,h_i)$ whose sectional curvature is
  between $-1$ and 0, each containing a domain $\Omega_i$ with
  boundary $\Sigma_i$ such that
  $I(\Omega_i)=1$, $\vert \Sigma_i \vert $ tends 
  to infinity with $i$, and  $\lambda_3(\Sigma_i)$ is
  bounded below by a positive constant.
\end{exa}

\medskip
According to \cite{brooks}, there exists a sequence $N_i$ of compact
hyperbolic manifolds of dimension $n\ge 2$ whose volume tends to
infinity with $i$ while $\lambda_2(N_i)$  does not converge to
zero. We can assume that $\vert N_i\vert >i$ and
$\lambda_2(N_i)>C$ for some positive constant $C$. 
For each $i$, set $M_i=N_i\times \R$ and $\Omega_i=N_i\times (-L_i,
L_i) \subset M_i$ with $L_i=\left(2 \vert
  N_i\vert\right)^{\frac{1}n}$. We endow $M_i$ with the product metric,
so that the sectional curvature of $M_i$ is between $-1$ and zero. The
boundary $\Sigma_i$ of  the domain $\Omega_i$ consists of two disjoint
copies of $N_i$. Therefore,
$\lambda_1(\Sigma_i)=\lambda_2(\Sigma_i)=0$ and
$\lambda_3(\Sigma_i)=\lambda_2(N_i)>C$. On the other hand, we have
$\vert \Sigma_i \vert = 2\vert N_i\vert >2i$, $\vert \Omega_i \vert =
2 L_i \vert N_i\vert= \left(2\vert N_i\vert \right)^{\frac{n+1}n}$
and $I(\Omega_i)=1$.

\begin{exa}\label{torus}
Let $(M,h)$ be any compact Riemannian manifold of dimension $n+1\ge 3$. Then there exists a sequence $\Omega_i$ of domains in $M$ with smooth boundaries $\Sigma_i$, and a positive constant $C$ such that $\lambda_2(\Sigma_i)\vert \Sigma_i \vert^{\frac2n}\ge C$  while $\vert \Sigma_i \vert$ and $I(\Omega_i)$ go to zero as $i$ tends to infinity.  
\end{exa}
In particular, there exist no constants $A_n$ and $B_n$ such that the sequence  $\Omega_i$ satisfies an inequality like  \eqref{injectivity}.

Let us first assume that $(M,h)$ is flat in a geodesic ball $B(x_0,r)$ centered at some $x_0\in M$. This ball is isometric to a Euclidean ball of radius $r$. Let $S_i$ be a  sequence of Euclidean spheres of radius $r/i$ that we embed isometrically as hypersurfaces $\Sigma_i$ into $B(x_0,r)\subset M$.  For each $i$,   the hypersurface $\Sigma_i$ bounds a domain $\Omega_i$ which contains the complement of $B(x_0,r)$. This sequence of domains $\Omega_i$ satisfies :
$$\vert \Omega_i \vert =\vert M \vert -\omega_{n+1}\left(\frac r i\right)^{n+1},$$
$$\vert \Sigma_i \vert =\rho_n\left(\frac r i\right)^{n},$$
that is, $\vert \Sigma_i \vert$ and $I(\Omega_i)$ go  to zero a $i$ tends to infinity. On the other hand, 
$$\lambda_2(\Sigma_i)\vert \Sigma_i \vert^{\frac2n} =\lambda_2(S_i)\vert S_i \vert^{\frac2n} = n\rho_n^{\frac2n}.$$

Now, for a general compact Riemannian manifold  $(M,h)$, it is possible to deform the metric $h$ into  a metric $h'$ which is quasi-isometric to $h$ with quasi-isometry ratio close to 1, and so that  $(M,h')$ is flat in a small geodesic ball $B(x_0,r)$. The sequence  $\Omega_i$ of domains constructed above with respect to $h'$ would  be such that, for the metric $h$,  $\vert \Sigma_i \vert$ and $I(\Omega_i)$ go  to zero a $i$ tends to infinity while $\lambda_2(\Sigma_i)\vert \Sigma_i \vert^{\frac2n}$ is bounded below by a positive constant.

\section{Proof of Lemma \ref{CMrevisited}}\label{proof-lemma-CM}

Let $(X,d,\mu)$ be a complete, locally compact metric measure
space, where $\mu$ is a finite measure. 
We assume that for all $r>0$, there exists an integer $N(r)$ such that
each ball of radius $4r$ can be covered by $N(r)$ balls of radius
$r$. Let us first prove the following :
\begin{lemma}\label{step1}
  Let $\beta$ be a positive number satisfying $\beta \le
  \frac{\mu(X)}{2}$, and let $r>0$ be such that, for all $x \in X$,
  $$
  \mu\left(B(x,r) \right)\le \frac{\beta}{2N(r)}.
  $$
  Then there exist two open subsets $A$ and $D$ of $X$ with $A \subset D$,
  such that $\mu(A)\ge \beta$, $\mu(D)\le 2N(r)\beta$ and
  $d(A,D^c) \ge 3r$.
\end{lemma}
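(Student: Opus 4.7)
My plan is to construct $A$ and $D$ explicitly as unions of metric balls, using a greedy selection to accumulate measure at least $\beta$ while keeping control over the enlarged region. Following the spirit of the CM construction, I would select points $x_1, x_2, \ldots \in X$ inductively by taking $x_1$ to maximize $\mu(B(\cdot,r))$ over $X$ and, having chosen $x_1,\ldots,x_{i-1}$, letting $x_i$ maximize $\mu(B(\cdot, r))$ over the feasible set $X_{i-1} := X \setminus \bigcup_{j<i} B(x_j, 4r)$. Stop at the smallest index $n$ for which $\sigma_n := \sum_{i=1}^n \mu(B(x_i, r)) \ge \beta$, and set
\[
A := \bigcup_{i=1}^n B(x_i, r), \qquad D := \bigcup_{i=1}^n B(x_i, 4r),
\]
noting that $D$ coincides with the open $3r$-neighborhood $A^{3r}$ of $A$.

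The three straightforward verifications I would carry out first are: $A$ and $D$ are open with $A \subset D$; since $d(x_i, x_j) \ge 4r > 2r$, the balls $B(x_i, r)$ are pairwise disjoint, so $\mu(A) = \sigma_n \ge \beta$ by the stopping criterion; and for $y \in B(x_i, r) \subset A$ and $z \in D^c$ (so that $d(z, x_j) \ge 4r$ for every $j$), the triangle inequality gives $d(y, z) \ge 4r - r = 3r$, hence $d(A, D^c) \ge 3r$.

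The hard part will be the mass estimate $\mu(D) \le 2N(r)\beta$. The covering hypothesis immediately gives $\mu(B(x_i, 4r)) \le N(r) \cdot \beta/(2N(r)) = \beta/2$, but the crude union bound $\mu(D) \le n\beta/2$ is too weak because $n$ is not a priori bounded. My plan is to write $D$ as the disjoint union of the ``first-covering'' pieces $C_i := B(x_i, 4r) \setminus \bigcup_{j<i} B(x_j, 4r)$ and to bound each $\mu(C_i)$ by $N(r)\nu_i$, where $\nu_i := \mu(B(x_i, r))$. The key observation supporting this bound is that every $y \in C_i$ lies in $X_{i-1}$, is therefore feasible at step $i$, and hence $\mu(B(y, r)) \le \nu_i$ by the greedy choice of $x_i$. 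I then construct a cover of $C_i$ by at most $N(r)$ balls of radius $r$ with centers in $C_i$, obtained by starting from the given $N(r)$-cover of $B(x_i, 4r)$ by $r$-balls and adjusting the centers to lie in $C_i$. Once this is in place, $\mu(C_i) \le N(r)\nu_i$ follows and
\[
\mu(D) = \sum_{i=1}^n \mu(C_i) \;\le\; N(r) \sum_{i=1}^n \nu_i \;\le\; N(r) \cdot \tfrac{3}{2}\beta \;<\; 2N(r)\beta,
\]
where the estimate $\sum \nu_i \le \beta + \nu_n \le \beta + \beta/(2N(r)) \le \tfrac{3}{2}\beta$ uses the minimality of $n$. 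The main obstacle will be executing the center-adjustment procedure so that it genuinely produces a cover of $C_i$ by $r$-balls (rather than $2r$-balls, which would lose a factor) centered in $C_i$, so that the feasibility bound $\mu(B(\cdot, r)) \le \nu_i$ on $C_i$ applies directly.
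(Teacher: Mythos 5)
Your construction takes a genuinely different route from the paper, and it has a real gap exactly where you flagged it. The paper does not use an incremental greedy choice of centers. Instead, for each $m$ it maximizes $\Psi_m(x^1,\dots,x^m)=\mu\bigl(\bigcup_{j\le m}B(x^j,r)\bigr)$ over \emph{all} $m$-tuples of centers simultaneously, obtaining a monotone sequence $\xi(m)$; it then locates $k\ge2$ with $\xi(k)\ge\beta$ and $\xi(k-1)\le\beta$, and sets $A$, $D$ to be the unions of $r$- and $4r$-balls around the optimal $k$-tuple. The point of this global optimization is that the inequality $\xi(k-1)\le\beta$ bounds the $\mu$-measure of \emph{any} union of $k-1$ balls of radius $r$, wherever the centers sit. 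After covering $D$ by $kN(r)\le 2(k-1)N(r)$ balls of radius $r$ and grouping them into $2N(r)$ families of at most $k-1$ balls, each family has measure $\le\xi(k-1)\le\beta$, giving $\mu(D)\le2N(r)\beta$ at once.

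Your greedy scheme cannot produce an analogous center-free bound, and the center-adjustment you propose does not go through. Concretely: from the covering hypothesis you may cover $B(x_i,4r)\supset C_i$ by $N(r)$ balls $B(y_\ell,r)$, but the centers $y_\ell$ need not lie in $C_i$, and can lie inside some earlier $B(x_j,4r)$ with $j<i$, where the greedy bound $\mu(B(\cdot,r))\le\nu_i$ is unavailable. Re-centering: if $B(y_\ell,r)\cap C_i\ne\emptyset$ and you pick $z_\ell$ in this intersection, you only get $B(y_\ell,r)\cap C_i\subset B(z_\ell,2r)$, and the hypothesis gives no control on the measure of $2r$-balls — the covering data are calibrated to pairs (radius $4r$, radius $r$), so replacing $r$ by $r/2$ would introduce $N(r/2)$ rather than $N(r)$. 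If instead you keep the original centers $y_\ell$ and use the weaker fact that each $y_\ell$ was feasible at the first step $j(y_\ell)\le i$ where it left the feasible set, you only obtain $\mu(B(y_\ell,r))\le\nu_{j(y_\ell)}\le\nu_1\le\beta/(2N(r))$, which collapses to the crude estimate $\mu(D)\le n\beta/2$ that you correctly dismissed, since nothing in the construction bounds $n$ (the later $\nu_i$ can be arbitrarily small, forcing $n$ to be large before $\sigma_n$ reaches $\beta$). So the missing step $\mu(C_i)\le N(r)\nu_i$ is not a technicality to be patched; it is the place where a greedy, locally-optimal selection genuinely fails to yield what the paper's globally-optimal $\xi(m)$ delivers for free. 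Your verifications of $\mu(A)\ge\beta$, $d(A,D^c)\ge3r$ and the disjoint decomposition $\mu(D)=\sum_i\mu(C_i)$ are all fine; the proof needs a different mechanism (such as the paper's $\xi(m)$ argument) to control $\mu(D)$.
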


\begin{proof}[Proof of Lemma~\ref{step1}]
  For each positive integer  $m$ we denote by $\mathcal{U}_{m}(r)$ the
  set of unions of $m$ balls of radius $r$, that is,  
  $$
  \mathcal{U}_{m}(r):=\left\{ \bigcup^{m}_{j=1}
    B\left(x^{j},r\right)\; ; \;    x^1,\dots, x^m\in X \right\},  
  $$
  and consider the evaluation $\Psi_{m}$ of the measure $\mu$
  on $\mathcal{U}_{m}(r)$, that is
  $${\Psi_{m}:X^{m}= \underbrace{X\times X \times \cdots \times X }_{m \  {\rm times}} \longrightarrow   \R }$$ 
  with
  $$
  \Psi_{m}(x^1,\dots, x^m)= \mu\left(\bigcup^{m}_{j=1} B\left(x^{j},r\right) \right).
  $$
  Since $(X,d)$ is a complete  locally compact metric space and
  $\mu(X)<+\infty$, the function $\Psi_{m}$ achieves its maximum
  $\xi(m)$ at  some point $\textbf{a}_{m}=(a_m^1,\dots, a_m^m) \in
  X^{m}$ (not necessary unique), that is, 
  $$ 
  \mu  \left(\bigcup^{m}_{j=1} B\left(x^{j},r\right)\right) \le
  \mu  \left(\bigcup^{m}_{j=1} B\left(a_m^{j},r\right)\right)
  $$
  for any $(x^1,\dots, x^m)\in X^m$. 
  
  Now, from the assumptions of the Lemma one clearly has $\xi(1)\le
  \frac{\beta}{2N(r)}\le \beta$. On the other hand, for $m$ large
  enough, we necessarily have $\xi(m)  \ge\frac{ 3\beta}{2}$ (indeed, it
  suffices to consider a ball $B(z,R)$ satisfying $\mu ( B(z,R))\ge
  \frac 34\mu(X)$ and notice that it can be covered with a finite number
  of balls of radius $r$). In conclusion, there exists an integer $k\ge
  2$ such that $\xi(k)\ge \beta$ and  $\xi(k-1)\le \beta$.
  
  We  set $A:=\underset{1\le j \le k}{\bigcup} B\left(a^{j}_{k},r\right) $ 
  and $D:=\underset{1\le j \le k}{\bigcup} B\left(a^{j}_{k},4r\right)
  $. From their definitions, these sets satisfy $\mu(A)=\xi (k)\ge
  \beta$,  and $d(A,D^c) \ge 3r$. We still need to check that $\mu(D)\le
  2N(r)\beta$. Indeed, according to our hypotheses, each ball
  $B\left(a^{j}_{k},4r\right)$ can be covered by $N(r)$ balls of
  radius $r$. Hence, $D$ can be covered by $kN(r)$ balls of radius
  $r$, namely
  $D\subset\underset{1\le j \le kN(r)}{\bigcup}B_{j}$, 
  where the $B_{j}$ are balls of radius $r$. From
    $kN(r) \le 2(k-1)N(r)$, it follows that this union of
  balls can be written as 
  $\underset{1\le j \le kN(r)}{\bigcup}B_{j}= \underset{1\le j \le
    2N(r)}{\bigcup}W_{j}$ where 
  each $W_{j}\in \mathcal{U}_{k-1}(r)$. It follows that
  \begin{eqnarray*}
    \mu(D)\le 
    \mu\left(\bigcup^{2N(r)}_{j=1}W_{j}\right) 
    \le \sum^{2N(4r)}_{j=1}\mu(W_{j}) 
    \le 2 N(r) \xi (k-1)\le 2 N(r) \beta.
  \end{eqnarray*}
\end{proof}

\begin{proof}[Proof of Lemma \ref{CMrevisited}]
  Let $K\ge 2$ be an integer and $r>0$ a positive number such that,
  $\forall x \in X$
  $$
  \mu(B(x,r)) \le \frac{\mu(X)}{4N^2(r)K}.
  $$
  Our aim is to construct  $K$ $\mu$-measurable subsets $A_1,...,A_K$
  of $X$ such that, $\forall i\le K$, $\mu(A_i)\ge
  \frac{\mu(X)}{2N(r)K}$ and, for  
  $i\not =j$, $d(A_i,A_j) \ge 3r$.
  
  For simplicity, we set $\alpha  = \frac{\mu(X)}{2N(r)K}$. We shall
  construct, using a finite induction,  $K$ pairs
  $\left(A_{1},D_{1}\right), \dots, \left(A_{K},D_{K}\right)$ of 
  sets such that, $\forall j\le K$,
\begin{enumerate}

\item[(0)]
$A_j\subset D_j$

\item 
$A_j \subset (\cup_{i=1}^{j-1}D_i)^c $

 \item 
	$\mu(A_{j})  \ge  \alpha$
	
	\item	
	$\mu(D_{j})  \le  2N(r)\alpha  = \frac{\mu(X)}{K}$
	
	\item	
$ d(A_{j},(\cup_{i=1}^{j}D_i)^c )  \ge  3r .$
 
\end{enumerate}
Indeed, the family $A_1,...,A_K$ will then satisfy the desired properties since $\mu(A_{j})  \ge  \alpha$ and, 
if $k<j$, $d(A_k,A_j) \ge d(A_{k},(\cup_{i=1}^{k}D_i)^c )  \ge  3r$ (notice that   
$A_j\subset (\cup_{i=1}^{j-1}D_i)^c \subset (\cup_{i=1}^{k}D_i)^c$ since $k<j$). 

 To initiate the iteration it suffices to apply Lemma \ref{step1} with
 $\beta=\alpha$. Therefore, there exist two open sets $A_{1}$ and
 $D_{1}$ satisfying $A_{1}\subset D_{1}$ and 
 $$\left\{
   \begin{array}{rll}
     \mu(A_{1})&  \ge & \alpha\\
     \mu(D_{1}) & \le & 2N(r)\alpha = \frac{\mu(X)}{K}\\
     d(A_{1},   D_{1}^{c}) & \ge & 3r .
   \end{array}\right. $$

Now, assume that we have already constructed, for a certain $j<K$,  $j$ couples $\left(A_{1},D_{1}\right), \dots, \left(A_{j},D_{j}\right)$ satisfying the induction hypothesis.  
We endow $X$ with the measure $\mu_{j+1}$ defined by $\mu_{j+1}(U)=\mu(U \cap (\cup_{i=1}^{j}D_i)^c)$.

From the induction hypothesis, one has 
$$\mu_{j+1}(X)=\mu((\cup_{i=1}^{j}D_i)^c) \ge \mu(X) -\sum_{i=1}^{j}\mu(D_i)\ge \mu(X)(1-\frac{j}{K})\ge \frac{\mu(X)}{K}.$$
Therefore, for all 
$x \in X$, one has
$$
\mu_{j+1}(B(x,r)) \le \mu(B(x,r)) \le \frac{\mu(X)}{4N^2(r)K} \le \frac{\mu_{j+1}(X)}{4N^2(r)}.
$$
This allows us to apply Lemma  \ref{step1} to the metric measure
space $(X, d, \mu_{j+1})$  with
$\beta= \alpha=\frac{\mu(X)}{2N(r)K}\le \frac{\mu_{j+1}(X)}{2N(r)}$. Thus,  
there exist two open sets  $A$ and $D$ satisfying $A \subset D$,
$\mu_{j+1}(A)\ge \alpha$,
$\mu_{j+1}(D) \le 2N(r)\alpha=\frac{\mu(X)}{K}$ and $d(A,D^c) \ge
3r$. We define the couple $\left(A_{j+1},D_{j+1}\right)$ by 
$$A_{j+1}=A\cap(\cup_{i=1}^{j}D_i)^c \qquad , \qquad D_{j+1}
=D \cap(\cup_{i=1}^{j}D_i)^c.$$ 
It remains to check that the family $\left\{\left(A_{1},D_{1}\right),
  \dots, \left(A_{j+1},D_{j+1}\right)\right\}$ satisfies the induction
hypothesis. Indeed, the three first properties of this hypothesis are
immediate consequences of the construction. To see that $
d(A_{j+1},(\cup_{i=1}^{j+1}D_i)^c )  \ge  3r $ we only need to observe
that $D= D_{j+1}\cup \left(D \cap \cup_{i=1}^{j}D_i \right)\subset
\cup_{i=1}^{j+1}D_i $ which implies  $
d(A_{j+1},(\cup_{i=1}^{j+1}D_i)^c )  \ge d(A,D^c) \ge  3r $.

\end{proof}

\bibliographystyle{plain}
\bibliography{biblioCEG}

\end{document}